%
\documentclass[11pt]{amsart}
\usepackage{amsmath}
\usepackage{amssymb}
\usepackage{amscd}
\usepackage[mathscr]{eucal}
\footskip = 1 cm
\pagestyle{plain}
\def\N{{\mathbb{N}}}
\def\Z{{\mathbb{Z}}}
\def\R{{\mathbb{R}}}
\def\S{{\mathbb{S}}}
\def\Image{\operatorname{Im}}

\def\Cl{\operatorname{Cl}}
\def\diam{\operatorname{diam}}

\def\Isom{\operatorname{Isom}}
\newtheorem{Theorem}{Theorem}[section]

\newtheorem{Lemma}[Theorem]{Lemma}
\newtheorem{Proposition}[Theorem]{Proposition}

\theoremstyle{definition}
\newtheorem{Definition}[Theorem]{Definition}
\newtheorem{Example}[Theorem]{Example}
\newtheorem*{Problem}{Problem}
\newtheorem*{Conjecture}{Conjecture}
\theoremstyle{remark}

\begin{document}
\sloppy
\title{On equivariant homeomorphisms of \\
boundaries of CAT(0) groups \\
and Coxeter groups}
\author{Tetsuya Hosaka}
\address{Department of Mathematics, 
Shizuoka University, Suruga-ku, Shizuoka 422-8529, Japan}
\date{March 30, 2014}
\email{sthosak@ipc.shizuoka.ac.jp}
\keywords{CAT(0) space; CAT(0) group; boundary; geometric action; 
equivariant homeomorphism; Coxeter group}
\subjclass[2000]{20F65; 57M07; 20F55}
\thanks{
Partly supported by the Grant-in-Aid for Young Scientists (B), 
The Ministry of Education, Culture, Sports, Science and Technology, Japan.
(No.\ 25800039).}
\begin{abstract}
In this paper, 
we investigate an equivariant homeomorphism of 
the boundaries $\partial X$ and $\partial Y$ of 
two proper CAT(0) spaces $X$ and $Y$ on which a CAT(0) group $G$ acts geometrically.
We provide a sufficient condition and an equivalent condition 
to obtain a $G$-equivariant homeomorphism of 
the boundaries $\partial X$ and $\partial Y$ 
as a continuous extension of the quasi-isometry 
$\phi:Gx_0\rightarrow Gy_0$ defined by $\phi(gx_0)=gy_0$, 
where $x_0\in X$ and $y_0\in Y$.
In this paper, we say that a CAT(0) group $G$ is {\it equivariant (boundary) rigid}, 
if $G$ determines its ideal boundary 
by the equivariant homeomorphisms as above.
As an application, we introduce some examples of (non-)equivariant rigid CAT(0) groups 
and we show that 
if Coxeter groups $W_1$ and $W_2$ are equivariant rigid as reflection groups, 
then so is $W_1 * W_2$.
We also provide a conjecture on non-rigidity of boundaries of some CAT(0) groups.
\end{abstract}
\maketitle
%
\section{Introduction}

In this paper, 
we investigate an equivariant homeomorphism of 
the boundaries of two proper CAT(0) spaces on which a CAT(0) group acts geometrically 
as a continuous extension of a quasi-isometry of the two CAT(0) spaces.

Definitions and details of CAT(0) spaces and their boundaries 
are found in \cite{BH} and \cite{GH}.
A {\it geometric} action on a CAT(0) space 
is an action by isometries which is proper (\cite[p.131]{BH}) and cocompact.
We note that every CAT(0) space on which some group acts 
geometrically is a proper space (\cite[p.132]{BH}).
A group $G$ is called a {\it CAT(0) group}, 
if $G$ acts geometrically on some CAT(0) space $X$.

It is well-known that 
if a Gromov hyperbolic group $G$ acts 
geometrically on a negatively curved space $X$, then 
the natural map $G\rightarrow X$ $(g\mapsto gx_0)$ extends continuously to 
an equivariant homeomorphism of the boundaries of $G$ and $X$.
Also if a Gromov hyperbolic group $G$ acts 
geometrically on negatively curved spaces $X$ and $Y$, then 
the boundaries of $X$ and $Y$ are $G$-equivariant homeomorphic.
Indeed 
the natural map $Gx_0 \rightarrow Gy_0$ $(gx_0\mapsto gy_0)$ extends continuously to 
a $G$-equivariant homeomorphism of the boundaries of $X$ and $Y$.
The boundaries of Gromov hyperbolic groups are quasi-isometric invariant 
(cf.\ \cite{BH}, \cite{CP}, \cite{GH}, \cite{Gr}, \cite{Gr0}).

Here in \cite{Gr0}, Gromov asked whether 
the boundaries of two CAT(0) spaces $X$ and $Y$ are $G$-equivariant homeomorphic 
whenever a CAT(0) group $G$ acts geometrically on the two CAT(0) spaces $X$ and $Y$.
In \cite{BR}, 
P.~L.~Bowers and K.~Ruane have constructed an example that 
the natural quasi-isometry $Gx_0 \rightarrow Gy_0$ $(gx_0\mapsto gy_0)$ 
does not extend continuously to 
any map between the boundaries $\partial X$ and $\partial Y$ of $X$ and $Y$.
Also S.~Yamagata \cite{Y} has constructed a similar example 
using a right-angled Coxeter group and its Davis complex.
Moreover, there is a research by C.~Croke and B.~Kleiner \cite{CK0} on 
an equivariant homeomorphism of the boundaries $\partial X$ and $\partial Y$.

Also, C.~Croke and B.~Kleiner \cite{CK} have constructed a CAT(0) group $G$ 
which acts geometrically on two CAT(0) spaces $X$ and $Y$ 
whose boundaries are not homeomorphic, 
and J.~Wilson \cite{W} has proved 
that this CAT(0) group has uncountably many boundaries.
Recently, C.~Mooney \cite{Moo1} has showed that 
the knot group $G$ of any connected sum of two non-trivial torus knots 
has uncountably many CAT(0) boundaries.

Also, it has been observed by M.~Bestvina \cite{Bes} 
that all the boundaries of a given CAT(0) group are shape equivalent 
and R.~Geoghegan and P.~Ontaneda have proved this in \cite{GO}.
Bestvina has asked the question whether 
all the boundaries of a given CAT(0) group are cell-like equivalent.
This question is an open problem and there are some resent research (cf.\ \cite{Moo2}).

The purpose of this paper is 
to provide a sufficient condition and an equivalent condition 
to obtain a $G$-equivariant homeomorphism between 
the two boundaries $\partial X$ and $\partial Y$ of 
two CAT(0) spaces $X$ and $Y$ on which a CAT(0) group $G$ acts geometrically 
as a continuous extension of the natural quasi-isometry 
$Gx_0\rightarrow Gy_0$ $(gx_0\mapsto gy_0)$, 
where $x_0\in X$ and $y_0\in Y$.

Now we recall the example of Bowers and Ruane in \cite{BR}.
Let $G= F_2\times \Z$ and $X=Y=T\times \R$, 
where $F_2$ is the rank 2 free group generated by $\{a,b\}$ and 
$T$ is the Cayley graph of $F_2$ with respect to the generating set $\{a,b\}$.
Then we define the action ``$\cdot$'' of the group $G$ on the CAT(0) space $X$ by 
\begin{align*}
&(a,0)\cdot(t,r)=(a\cdot t,r), \\
&(b,0)\cdot(t,r)=(b\cdot t,r), \\
&(1,1)\cdot(t,r)=(t,r+1),
\end{align*}
for each $(t,r)\in T\times \R=X$, 
and also define the action ``$*$'' of the group $G$ on the CAT(0) space $Y$ by 
\begin{align*}
&(a,0)*(t,r)=(a\cdot t,r), \\
&(b,0)*(t,r)=(b\cdot t,r+2), \\ 
&(1,1)*(t,r)=(t,r+1),
\end{align*}
for each $(t,r)\in T\times \R=Y$.
Then the group $G$ acts geometrically on the two CAT(0) spaces $X$ and $Y$, 
and the quasi-isometry $g\cdot x_0 \mapsto g*y_0$ 
(where $x_0=(1,0)\in X$ and $y_0=(1,0)\in Y$) 
does not extend continuously to 
any map from $\partial X$ to $\partial Y$.
Indeed for $g_i=a^ib^i\in F_2$, 
$\{g_i^\infty\,|\,i\in\N\}\rightarrow a^\infty$ as $i\rightarrow\infty$ in $\partial T$,
\begin{align*}
&\lim_{n\rightarrow \infty}(g_i^n,0)\cdot x_0=[g_i^\infty,0], \\
&\lim_{n\rightarrow \infty}(a^n,0)\cdot x_0=[a^\infty,0],
\end{align*}
in $X\cup\partial X$, and 
\begin{align*}
&\lim_{n\rightarrow \infty}(g_i^n,0)*y_0=[g_i^\infty,\frac{\pi}{4}], \\
&\lim_{n\rightarrow \infty}(a^n,0)*y_0=[a^\infty,0],
\end{align*}
in $Y\cup\partial Y$.
Hence any map from $\partial X$ to $\partial Y$ obtained 
as a continuously extension of the quasi-isometry 
$G\cdot x_0\rightarrow G*y_0$ $(g\cdot x_0\rightarrow g*y_0)$ 
must send $[g_i^\infty,0]$ to $[g_i^\infty,\frac{\pi}{4}]$ and 
fix $[a^\infty,0]$.
However, this is incompatible with continuously at $[a^\infty,0]$, 
because $[g_i^\infty,0]\rightarrow [a^\infty,0]$ as $i\rightarrow \infty$ 
(\cite[p.187]{BR}).

Here in this example, we note that 
\begin{enumerate}
\item[(a)] the point $a^i\cdot x_0$ is in the geodesic segment from $x_0$ to $g^i\cdot x_0$ in $X$, 
i.e., $a^i\cdot x_0\in [x_0,g^i\cdot x_0]$ in $X$ for any $i\in\N$ and 
\item[(b)] the distance between the point $a^i*y_0$ and the geodesic segment from $y_0$ to $g^i*y_0$ 
is unbounded for $i\in\N$ in $Y$, i.e., 
there does {\it not} exist a constant $M>0$ such that 
$d(a^i*y_0, [y_0,g^i*y_0])\le M$ for any $i\in\N$ in $Y$.
\end{enumerate}

Based on this observation, we consider a condition.

We suppose that a group $G$ acts geometrically on two CAT(0) spaces $X$ and $Y$.
Let $x_0\in X$ and $y_0\in Y$.
Then we define the condition $(*)$ as follows:
\begin{enumerate}
\item[$(*)$] 
There exist constants $N>0$ and $M>0$ such that $GB(x_0,N)=X$, $GB(y_0,M)=Y$ and 
for any $g,a\in G$, 
if $[x_0,gx_0] \cap B(ax_0,N)\neq\emptyset$ in $X$ then 
$[y_0,gy_0] \cap B(ay_0,M)\neq\emptyset$ in $Y$.
\end{enumerate}

We first prove the following theorem in Section~3.

\begin{Theorem}\label{Thm1}
Suppose that a group $G$ acts geometrically on two CAT(0) spaces $X$ and $Y$.
Let $x_0\in X$ and $y_0\in Y$.
If the condition~$(*)$ holds, then 
there exists a $G$-equivariant homeomorphism of 
the boundaries $\partial X$ and $\partial Y$ 
as a continuous extension of the quasi-isometry 
$\phi:Gx_0\rightarrow Gy_0$ defined by $\phi(gx_0)=gy_0$.
\end{Theorem}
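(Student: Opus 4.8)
The plan is to build the homeomorphism directly from the combinatorial data encoded in condition $(*)$, using the standard fact that a point of the boundary $\partial X$ can be approached by a sequence of orbit points $g_n x_0$, and that sequences of orbit points converge in $X\cup\partial X$ iff the geodesic segments $[x_0,g_nx_0]$ fellow-travel longer and longer. First I would fix the quasi-isometry $\phi:Gx_0\to Gy_0$, $gx_0\mapsto gy_0$; that it is a quasi-isometry of orbits is immediate from properness and cocompactness (via the Milnor–\v{S}varc lemma), and both orbits are nets in $X$, $Y$ respectively since $GB(x_0,N)=X$ and $GB(y_0,M)=Y$. The core object is a map $\partial\phi:\partial X\to\partial Y$ defined as follows: given $\xi\in\partial X$, choose a geodesic ray $c$ from $x_0$ to $\xi$; since $Gx_0$ is $N$-dense, pick $g_n\in G$ with $c(n)\in B(g_nx_0,N)$; then define $\partial\phi(\xi)=\lim_n g_ny_0$, and the first substantial step is to show this limit exists in $\partial Y$ and is independent of all choices.

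The key mechanism for existence and well-definedness is condition $(*)$ applied along the ray. I would argue that for $m<n$, since $g_mx_0$ lies within $N$ of a point of $[x_0, g_nx_0]$ (because $c(m)\in[x_0,c(n)]\subset \text{nbhd}_N([x_0,g_nx_0])$ up to a bounded error from the net), condition $(*)$ forces $g_my_0$ to lie within $M$ of $[y_0,g_ny_0]$. This uniform bound, together with convexity of the metric in the CAT(0) space $Y$, shows that the segments $[y_0,g_ny_0]$ fellow-travel on longer and longer initial stretches, hence the $g_ny_0$ converge to a point of $\partial Y$; and comparing two approximating sequences for the same $\xi$ (or for two rays with the same endpoint, which stay uniformly close) via $(*)$ in the same way shows the limit is independent of choices. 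Thus $\partial\phi$ is well defined, and it visibly commutes with the $G$-action because $\phi$ does and the approximating data transforms equivariantly: $g\cdot\xi$ is approximated by $\{gg_nx_0\}$, which maps to $\{gg_ny_0\}$.

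Next I would check that $\partial\phi$ extends $\phi$ continuously on $X\cup\partial X\to Y\cup\partial Y$: a sequence $h_nx_0$ converging to $\xi\in\partial X$ has its geodesics $[x_0,h_nx_0]$ fellow-traveling the ray $c$, so the net-approximants agree up to bounded error and $(*)$ transports this to $Y$, giving $h_ny_0=\phi(h_nx_0)\to\partial\phi(\xi)$. Then continuity of $\partial\phi$ on $\partial X$ follows: if $\xi_k\to\xi$ in $\partial X$, their rays from $x_0$ fellow-travel on initial segments of length $R_k\to\infty$, so we may choose common initial approximants $g_1,\dots,g_{R_k}$, and the same uniform $M$-bound in $Y$ (from $(*)$) forces the images $\partial\phi(\xi_k)$ to have rays agreeing with that of $\partial\phi(\xi)$ on initial segments whose length tends to infinity, hence $\partial\phi(\xi_k)\to\partial\phi(\xi)$. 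Finally, the whole construction is symmetric in $X$ and $Y$: condition $(*)$ as stated is one-directional, but I would first observe (or prove as a preliminary lemma) that $(*)$ is in fact symmetric — the reverse implication follows by a thin-triangles / convexity argument since $Gx_0$ and $Gy_0$ are cobounded nets — so the same construction yields $\partial\psi:\partial Y\to\partial X$ inverse to $\partial\phi$, and a continuous bijection between compact Hausdorff spaces is a homeomorphism.

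The main obstacle I expect is precisely verifying that $(*)$ is symmetric, or equivalently constructing the inverse map: condition $(*)$ only tells us that "being near a segment" is preserved from $X$ to $Y$, and one must show the converse, i.e.\ that if $[y_0,gy_0]$ passes near $ay_0$ then $[x_0,gx_0]$ passes near $ax_0$. The natural route is to show that the negation of the $X$-side statement, combined with convexity in $X$, produces an orbit point $a'x_0$ far from $[x_0,gx_0]$ in $X$ while $a'y_0$ is forced (by a counting/coboundedness argument) to stay near $[y_0,gy_0]$ in $Y$ — contradicting $(*)$ applied in the forward direction only if one already has a handle on both segments simultaneously. Managing the interaction of the two (a priori unrelated) metrics through only the single combinatorial hypothesis $(*)$, and keeping all the fellow-traveling constants uniform, is the delicate part; everything else is a routine application of CAT(0) convexity and the net property of the orbits.
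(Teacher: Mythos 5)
Your construction of $\bar{\phi}$ --- approximate a ray to $\xi\in\partial X$ by orbit points $g_nx_0$ taken from the $N$-net $Gx_0$, use $(*)$ to see that the segments $[y_0,g_ny_0]$ fellow-travel on longer and longer initial stretches and hence that $g_ny_0$ converges in $Y\cup\partial Y$, then check well-definedness, equivariance and continuity --- is exactly the route the paper takes (its first proposition in Section~3 is your ``Cauchy in $X\cup\partial X$ implies Cauchy in $Y\cup\partial Y$'' step, and its tracking lemma records the uniform constants you invoke). The genuine gap is in your treatment of bijectivity. You reduce it to the claim that $(*)$ is symmetric and then build an inverse by the same recipe; but you never prove the symmetry, and the route you sketch for it (derive a contradiction from an orbit point far from $[x_0,gx_0]$ yet forced near $[y_0,gy_0]$) is, as you yourself concede, circular: it presupposes control of both segments simultaneously, which is precisely what is to be shown. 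As written, this step does not go through, and without it you have neither surjectivity nor an inverse.

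Two ways to close the gap. First, the symmetry of $(*)$ is in fact true, but the proof is a covering argument rather than a contradiction argument: cover $[x_0,gx_0]$ by balls $B(a_ix_0,N)$ along a chain with consecutive $a_ix_0$ at distance at most $2N+1$; the forward direction of $(*)$ places every $a_iy_0$ within $M$ of $[y_0,gy_0]$, consecutive $a_iy_0$ are within $\lambda(2N+1)+C$ of each other, and the chain starts at $y_0$ and ends near $gy_0$, so by convexity the points $a_iy_0$ coarsely cover all of $[y_0,gy_0]$; any $ay_0$ close to $[y_0,gy_0]$ is then boundedly close to some $a_iy_0$, whence $ax_0$ is boundedly close to $a_ix_0$ and hence to $[x_0,gx_0]$. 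Second, the paper avoids any reversed form of $(*)$ altogether: surjectivity follows from compactness of $X\cup\partial X$ (given $\bar{\alpha}=\lim g_iy_0$, pass to a subsequence with $g_{i_j}x_0\to\alpha$ in $X\cup\partial X$ and apply the forward direction of $(*)$ to conclude $\bar{\phi}(\alpha)=\bar{\alpha}$), and injectivity follows from the lower quasi-isometric bound on orbit distances, $d_Y(gy_0,hy_0)\ge\frac{1}{\lambda}d_X(gx_0,hx_0)-C$, combined with the fact (already a consequence of the forward $(*)$) that each image ray $\xi_{\bar{\phi}(\alpha)}$ lies within bounded Hausdorff distance of the image orbit sequence. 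Either repair completes your plan; the paper's is the shorter one because it only ever uses $(*)$ in the stated direction.
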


Next, we consider the following condition~$(**)$.

We suppose that a group $G$ acts geometrically on two CAT(0) spaces $X$ and $Y$.
Let $x_0\in X$ and $y_0\in Y$.
Then we define the condition $(**)$ as follows:
\begin{enumerate}
\item[$(**)$] 
For any sequence $\{g_i\,|\,i\in \N\} \subset G$, 
the sequence $\{g_i x_0\,|\,i\in \N\}$ is a Cauchy sequence in $X\cup \partial X$ if and only if 
the sequence $\{g_i y_0\,|\,i\in \N\}$ is a Cauchy sequence in $Y\cup \partial Y$.
\end{enumerate}

Then we show the following theorem in Section~4.

\begin{Theorem}\label{Thm2}
Suppose that a group $G$ acts geometrically on two CAT(0) spaces $X$ and $Y$.
Let $x_0\in X$ and $y_0\in Y$.
The condition~$(**)$ holds if and only if 
there exists a $G$-equivariant homeomorphism of 
the boundaries $\partial X$ and $\partial Y$ 
as a continuous extension of the quasi-isometry 
$\phi:Gx_0\rightarrow Gy_0$ defined by $\phi(gx_0)=gy_0$.
\end{Theorem}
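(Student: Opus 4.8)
The plan is to pass to the compactifications $X\cup\partial X$ and $Y\cup\partial Y$, which are compact metrizable since $X$ and $Y$ are proper, so that a sequence is Cauchy exactly when it converges; I fix compatible metrics $d_{X\cup\partial X}$ and $d_{Y\cup\partial Y}$ on them. Because the action is cocompact, every boundary point is a limit of orbit points, so $\overline{Gx_0}=Gx_0\cup\partial X$ and $\overline{Gy_0}=Gy_0\cup\partial Y$. The goal is then to show that $(**)$ is equivalent to the existence of a continuous map $\overline\phi:Gx_0\cup\partial X\to Gy_0\cup\partial Y$ with $\overline\phi(gx_0)=gy_0$ whose restriction to $\partial X$ is a $G$-equivariant homeomorphism onto $\partial Y$.

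Assume $(**)$. For $\xi\in\partial X$ choose, by cocompactness, $g_i\in G$ with $g_ix_0\to\xi$; then $\{g_ix_0\}$ is Cauchy in $X\cup\partial X$, so by $(**)$ the sequence $\{g_iy_0\}$ is Cauchy in $Y\cup\partial Y$ and converges, and since $d(x_0,g_ix_0)\to\infty$ and $\phi$ is a quasi-isometry we get $d(y_0,g_iy_0)\to\infty$, so the limit lies in $\partial Y$; define $\overline\phi(\xi)$ to be that limit. If $g_ix_0\to\xi$ and $h_ix_0\to\xi$, the interleaved sequence $g_1x_0,h_1x_0,g_2x_0,\dots$ also converges to $\xi$, hence its image under $\phi$ is Cauchy in $Y\cup\partial Y$ by $(**)$, forcing $\lim g_iy_0=\lim h_iy_0$; thus $\overline\phi$ is well defined, and it extends $\phi$ essentially by construction. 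Reading $(**)$ from $Y$ to $X$ produces in the same way a map $\overline\psi:\partial Y\to\partial X$ extending $\phi^{-1}$, and choosing common approximating sequences gives $\overline\psi\circ\overline\phi=\mathrm{id}_{\partial X}$ and $\overline\phi\circ\overline\psi=\mathrm{id}_{\partial Y}$. Equivariance is automatic: $G$ acts by homeomorphisms on both compactifications, so $g_ix_0\to\xi$ yields $hg_ix_0\to h\xi$ and $hg_iy_0\to h\,\overline\phi(\xi)$, whence $\overline\phi(h\xi)=h\,\overline\phi(\xi)$.

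The step requiring real work is continuity of $\overline\phi$, which I would prove sequentially using metrizability. Let $\xi_n\to\xi$ in $\partial X$. For each $n$, since $\overline\phi(\xi_n)$ is the limit of $\{g^{(n)}_iy_0\}$ for some sequence with $g^{(n)}_ix_0\to\xi_n$, I can pick a single element $g_n\in G$ with both $d_{X\cup\partial X}(g_nx_0,\xi_n)<1/n$ and $d_{Y\cup\partial Y}(g_ny_0,\overline\phi(\xi_n))<1/n$. Then $g_nx_0\to\xi$, so $\{g_nx_0\}$ is Cauchy in $X\cup\partial X$, hence $\{g_ny_0\}$ is Cauchy in $Y\cup\partial Y$ by $(**)$; it converges to a point of $\partial Y$, which by well-definedness of $\overline\phi$ must be $\overline\phi(\xi)$, and therefore $\overline\phi(\xi_n)\to\overline\phi(\xi)$. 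A continuous bijection from the compact space $\partial X$ onto the Hausdorff space $\partial Y$ is a homeomorphism, so $\overline\phi|_{\partial X}$ is a homeomorphism; that $\overline\phi$ is also continuous at orbit points, so that it is a genuine continuous extension of $\phi$, follows from the same sequential computation. I expect this transfer of the Cauchy property through an approximating orbit to be the crux of the argument.

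Conversely, suppose a continuous $\Phi:Gx_0\cup\partial X\to Gy_0\cup\partial Y$ is given with $\Phi(gx_0)=gy_0$ and with $\Phi|_{\partial X}:\partial X\to\partial Y$ a $G$-equivariant homeomorphism. Let $\{g_i\}\subset G$ with $\{g_ix_0\}$ Cauchy in $X\cup\partial X$, and let $p$ be its limit. If $p\in X$, then $d(x_0,g_ix_0)$ is bounded, so by proper discontinuity the points $g_ix_0$ take only finitely many values and the sequence is eventually constant, hence so is $\{g_iy_0\}$ since $g_iy_0=\Phi(g_ix_0)$; in particular it is Cauchy. If $p=\xi\in\partial X$, then $g_iy_0=\Phi(g_ix_0)\to\Phi(\xi)\in\partial Y$ by continuity of $\Phi$, so $\{g_iy_0\}$ converges and is Cauchy. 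For the reverse implication one applies the same reasoning to $\Phi|_{\partial X}^{-1}$: the induced map $Gy_0\cup\partial Y\to Gx_0\cup\partial X$ sending $gy_0\mapsto gx_0$ and $\eta\mapsto\Phi|_{\partial X}^{-1}(\eta)$ is continuous, trivially at orbit points because orbits of proper actions are discrete, and at boundary points because composing it with $\Phi$ gives the identity and $Gx_0\cup\partial X$ is compact, so the argument just given, with the roles of $X$ and $Y$ interchanged, shows that $\{g_iy_0\}$ Cauchy implies $\{g_ix_0\}$ Cauchy. This yields $(**)$, completing the equivalence.
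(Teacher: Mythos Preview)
Your proof is correct and follows essentially the same path as the paper's: define $\overline\phi$ by pushing convergent orbit sequences through $(**)$, check well-definedness via interleaving, obtain bijectivity from the symmetrically defined inverse, prove continuity by a diagonal choice of approximating orbit points, and use compactness to upgrade to a homeomorphism. The only cosmetic differences are that you work with a fixed metric on the compactification rather than the paper's explicit neighborhood-basis definition of ``Cauchy'' (these are equivalent by Lemma~\ref{Lem1}), and that you prove the converse directly (extension $\Rightarrow$ $(**)$) whereas the paper argues by contrapositive (not $(**)$ $\Rightarrow$ no extension); the underlying observation---that two subsequential boundary limits in $Y$ would contradict injectivity of $\Phi|_{\partial X}$---is the same.
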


There are problems of {\it rigidity} in group actions (see Section~10).

In this paper, 
a CAT(0) group $G$ is said to be {\it (boundary-)rigid}, 
if $G$ determines its ideal boundary up to homeomorphisms, i.e., 
all boundaries of CAT(0) spaces on which $G$ acts geometrically 
are homeomorphic.
Also a CAT(0) group $G$ is said to be {\it equivariant (boundary) rigid}, 
if $G$ determines its ideal boundary 
by the equivariant homeomorphisms as above, i.e., 
if for any two CAT(0) spaces $X$ and $Y$ on which $G$ acts geometrically 
it obtain a $G$-equivariant homeomorphism of 
the boundaries $\partial X$ and $\partial Y$ 
as a continuous extension of the quasi-isometry 
$\phi:Gx_0\rightarrow Gy_0$ defined by $\phi(gx_0)=gy_0$, 
where $x_0\in X$ and $y_0\in Y$.

As an application of Theorem~\ref{Thm1}, 
we introduce some examples of equivariant rigid CAT(0) groups in Section~5.
In particular, any group of the form 
$$\Z^{n_1}*\dots *\Z^{n_k}*A_1*\dots * A_l$$ 
where $n_i\in \N$ and each $A_j$ is a finite group 
is an equivariant rigid CAT(0) group.
(Here we note that 
there is a recent research by G.~C.~Hruska \cite{Hr} 
on CAT(0) spaces with isolated flats.)

Also as an application of Theorem~\ref{Thm2}, 
we introduce an example of non equivariant rigid CAT(0) groups in Section~6.
In particular, we show that every CAT(0) group of the form $G=F \times H$ 
where $F$ is a free group of rank $n\ge 2$ and $H$ is an infinite CAT(0) group, 
is non equivariant rigid.

In Section~7, we provide some remarks and questions 
on (equivariant) rigidity of boundaries of CAT(0) groups.
Here the following natural open problem is important.

\begin{Problem}
If $G_1$ and $G_2$ are (equivariant) rigid CAT(0) groups, then is $G_1*G_2$ also?
\end{Problem}

In Section~8, we investigate Coxeter groups acting on CAT(0) spaces as reflection groups.

Now we consider a {\it cocompact discrete reflection group} $G$ of a CAT(0) space $X$.
(Here definitions and details of 
{\it reflections} and {\it cocompact discrete reflection groups} of geodesic spaces 
are found in \cite{Ho5} and \cite{Ho6}.)
Then the group $G$ becomes a Coxeter group.

In this paper, 
a Coxeter group $W$ is said to be {\it equivariant rigid as a reflection group}, if 
for any two CAT(0) spaces $X$ and $Y$ on which $W$ acts geometrically 
as $W$ becomes a cocompact discrete reflection group of $X$ and $Y$, 
it obtain a $W$-equivariant homeomorphism of 
the boundaries $\partial X$ and $\partial Y$ 
as a continuous extension of the quasi-isometry 
$\phi:Wx_0\rightarrow Wy_0$ defined by $\phi(wx_0)=wy_0$, 
where $x_0\in X$ and $y_0\in Y$.

Then we show the following theorem.

\begin{Theorem}\label{Thm3}
The following statements hold.
\begin{enumerate}
\item[${\rm (i)}$] If Coxeter groups $W_1$ and $W_2$ are equivariant rigid 
as reflection groups, then $W_1*W_2$ is also.
\item[${\rm (ii)}$] For a Coxeter group $W=W_A*_{W_{A\cap B}}W_B$ where $W_{A\cap B}$ is finite, 
if $W$ determines its Coxeter system up to isomorphism, and 
if $W_A$ and $W_B$ are equivariant rigid as reflection groups then $W$ is also, 
where $W_T$ is the parabolic subgroup of $W$ generated by $T$.
\end{enumerate}
\end{Theorem}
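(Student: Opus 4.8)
The plan is to reduce both parts to the combinatorial criterion $(**)$ of Theorem~\ref{Thm2} and to transfer it from the factors to the amalgam. So assume $W$ acts geometrically on proper CAT(0) spaces $X$ and $Y$ as a cocompact discrete reflection group, with fundamental chambers $K_X\ni x_0$ and $K_Y\ni y_0$. The key structural input is that each free factor (in part~(i)) --- resp.\ each of $W_A,W_B,W_{A\cap B}$ and their $W$-conjugates (in part~(ii)) --- acts geometrically, again as a cocompact discrete reflection group, on a \emph{convex} invariant subspace of $X$ and of $Y$; for such a subgroup $P$ with subspace $X_P\ni x_0$ one has moreover that the geodesic segment of $X$ between any two points of $Px_0$ lies in $X_P$, and $\partial X_P$ embeds as a subspace of $\partial X$ in such a way that Cauchyness of a sequence in $Px_0$ is unchanged between $X_P\cup\partial X_P$ and $X\cup\partial X$. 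In part~(i) this holds with no extra hypothesis; in part~(ii) it is exactly what the hypothesis ``$W$ determines its Coxeter system up to isomorphism'' provides, since it makes the amalgam decomposition $W=W_A*_{W_{A\cap B}}W_B$ visible inside the reflection-group structures on $X$ and on $Y$. Writing $X_\star\subseteq X$ and $Y_\star\subseteq Y$ ($\star\in\{A,B\}$, resp.\ $\{1,2\}$) for the subspaces attached to the factors, each intersection $gX_A\cap g'X_B$ is a translate of the subspace attached to $W_{A\cap B}$, hence a \emph{bounded} set because $W_{A\cap B}$ is finite (in part~(i) it is a single point, as $W_{A\cap B}=\{1\}$); so the $W$-translates of the $X_\star$'s, and of the $Y_\star$'s, are arranged along the Bass--Serre tree $T$ of the splitting, meeting pairwise in sets of uniformly bounded diameter.

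The geometric heart of the argument, and the step I expect to be the main obstacle, is a uniform tracking lemma. Write $g\in W$ in its syllable normal form $g=u_1\cdots u_k$ relative to the splitting and set $p_m=u_1\cdots u_m x_0\in X$, $q_m=u_1\cdots u_m y_0\in Y$. Then there is a constant $D$, depending only on the two actions, such that for every such $g$ and every $m$ the point $p_m$ lies within $D$ of $[x_0,gx_0]$ and $q_m$ within $D$ of $[y_0,gy_0]$, and within each piece that $[x_0,gx_0]$ passes through the segment coincides, up to error $D$, with the geodesic of that piece joining the two consecutive syllable points it meets. Given the tree-of-bounded-overlap arrangement above, this reduces to the elementary fact that in such an arrangement a geodesic joining points of two adjacent pieces must cross their (bounded) intersection; the delicate point is to establish the arrangement itself with \emph{uniform} constants, for which one uses the length additivity $\ell(u_1\cdots u_k)=\sum_m\ell(u_m)$ along the syllable decomposition in a free or amalgamated (over a finite subgroup) product of Coxeter groups, together with the facts that a geodesic between two orbit points of a cocompact discrete reflection group of a CAT(0) space crosses each separating wall exactly once and so traverses a minimal gallery, and that distinct syllable data drive the corresponding orbit points into distinct pieces.

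Granting this, we verify $(**)$ for $W$ on $(X,Y)$ and conclude by Theorem~\ref{Thm2}. Let $\{g_i\}\subset W$ with $\{g_ix_0\}$ Cauchy in $X\cup\partial X$; by symmetry it suffices to prove $\{g_iy_0\}$ Cauchy in $Y\cup\partial Y$. If $\{g_ix_0\}$ is bounded, properness forces $\{g_i\}$ into a finite set and the claim is immediate; otherwise $\{g_ix_0\}$ converges to some $\xi\in\partial X$, and by the tracking lemma and the tree structure exactly one of the following occurs. \emph{Either} there is an infinite syllable sequence $u_1,u_2,\dots$ such that for each $N$ the first $N$ syllables of $g_i$ equal $u_1,\dots,u_N$ for all large $i$ (the ray $[x_0,\xi)$ runs off to infinity through infinitely many pieces); then $[y_0,g_iy_0]$ tracks the fixed points $y_0,u_1y_0,u_1u_2y_0,\dots$, which form a sequence converging to some $\eta\in\partial Y$, so that $\{g_iy_0\}\to\eta$ --- and this case uses only the tracking lemma. \emph{Or} there are a fixed prefix $v=u_1\cdots u_{m_0}$ and a factor $W_\star$ with $g_i=vh_i$, $h_i\in W_\star$, for all large $i$, and $\{h_ix_0\}=\{v^{-1}g_ix_0\}$ converges to $v^{-1}\xi\in\partial X_\star$ (the ray settles into the single piece $vX_\star$); here equivariant rigidity of $W_\star$ as a reflection group enters: by Theorem~\ref{Thm2}, condition $(**)$ holds for $W_\star$ on $(X_\star,Y_\star)$, so Cauchyness of $\{h_ix_0\}$ in $X_\star\cup\partial X_\star$ --- equivalently in $X\cup\partial X$ --- forces Cauchyness of $\{h_iy_0\}$ in $Y_\star\cup\partial Y_\star$, equivalently in $Y\cup\partial Y$, and applying the isometry $v$ gives the claim for $\{g_iy_0\}$. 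Hence $(**)$ holds, and Theorem~\ref{Thm2} yields the desired $W$-equivariant homeomorphism $\partial X\to\partial Y$. Part~(i) is the case $W_{A\cap B}=\{1\}$, in which the hypothesis on the Coxeter system is not needed.
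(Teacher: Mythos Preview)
Your proposal is correct and follows essentially the same route as the paper: both arguments use that the factors $W_\star$ act as cocompact discrete reflection groups on the convex subspaces $W_\star\overline{C}$ (via \cite{Ho6}), that the pieces meet in bounded sets because $W_{A\cap B}$ is finite, and that syllable points $u_1\cdots u_m x_0$ lie uniformly close to $[x_0,gx_0]$ (the paper's citation of \cite[Theorem~3]{Ho6} is exactly your tracking lemma); both then split into the cases ``ray eventually stays in one piece'' versus ``ray traverses infinitely many pieces,'' invoking factor rigidity in the first case and the tracking lemma alone in the second. The only organizational difference is that you verify condition~$(**)$ and invoke Theorem~\ref{Thm2}, whereas the paper constructs $\bar\phi:\partial X\to\partial Y$ directly by the same case analysis and then asserts bijectivity and continuity ``by similar arguments in sections above''; your packaging via $(**)$ is arguably cleaner, since it absorbs those verifications into the already-proved Theorem~\ref{Thm2}.
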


\vspace*{3mm}

Finally, by observing examples and applications of Theorems~\ref{Thm1} and \ref{Thm2},
the following conjecture arises.

\begin{Conjecture}
The group $G=(F_2\times \Z)*\Z_2$ 
will be a non-rigid CAT(0) group with uncountably many boundaries.
\end{Conjecture}

We explain where this conjecture comes from in Section~9 
and introduce problems of rigidity on group actions in Section~10.

\section{CAT(0) spaces and their boundaries}

Details of CAT(0) spaces and their boundaries 
are found in \cite{ABN}, \cite{BH}, \cite{GO}, \cite{GH} and \cite{Sw}.

A proper geodesic space $(X,d_X)$ is called a {\it CAT(0) space}, 
if the ``CAT(0)-inequality'' holds 
for all geodesic triangles $\triangle$ and 
for all choices of two points $x$ and 
$y$ in $\triangle$.
Here the ``CAT(0)-inequality'' is defined as follows: 
Let $\triangle$ be a geodesic triangle in $X$.
A {\it comparison triangle} for $\triangle$ is 
a geodesic triangle $\triangle'$ in the Euclidean plain $\R^2$
with same edge lengths as $\triangle$.
Choose two points $x$ and $y$ in $\triangle$. 
Let $x'$ and $y'$ denote the corresponding points in $\triangle'$.
Then the inequality $$d_X(x,y) \le d_{\R^2}(x',y')$$ 
is called the {\it CAT(0)-inequality}, 
where $d_{\R^2}$ is the natural metric on $\R^2$.

Every proper CAT(0) space can be compactified by 
adding its ``boundary''.
Let $(X,d_X)$ be a proper CAT(0) space, 
and let $\mathcal{R}$ be the set of all geodesic rays in $X$.
We define an equivalence relation $\sim$ in $\mathcal{R}$ as follows:
For geodesic rays $\xi,\zeta:[0,\infty)\rightarrow X$, 
$$ \xi\sim\zeta \iff 
\Image \xi\subset B(\Image\zeta,N) \ \text{for some $N\ge0$},$$
where $B(A,N):=\{x\in X\,|\, d_X(x,A)\le N\}$ for $A\subset X$.
Then the {\it boundary} $\partial X$ of $X$ is 
defined as 
$$ \partial X = \mathcal{R}/\sim. $$
For each geodesic ray $\xi\in\mathcal{R}$, 
the equivalence class of $\xi$ is denoted by $\xi(\infty)$.

It is known that for each $\alpha\in\partial X$ and each $x_0 \in X$, 
there exists a unique geodesic ray $\xi_{\alpha}:[0,\infty)\rightarrow X$ 
such that $\xi_{\alpha}(0)=x_0$ and $\xi_{\alpha}(\infty)=\alpha$.
Thus we can identify the boundary $\partial X$ of $X$ as 
the set of all geodesic rays $\xi$ with $\xi(0)=x_0$.

Let $(X,d_X)$ be a proper CAT(0) space and let $x_0\in X$.
We define a topology on $X\cup \partial X$ as follows: 
\begin{enumerate}
\item[(1)] $X$ is an open subspace of $X\cup\partial X$. 
\item[(2)] Let $\alpha\in\partial X$ and let $\xi_{\alpha}$ be the geodesic ray such that 
$\xi_{\alpha}(0)=x_0$ and $\xi_{\alpha}(\infty)=\alpha$.
For $r>0$ and $\epsilon>0$, we define 
$$ U_{X \cup \partial X}(\alpha;r,\epsilon)=
\{ x \in X \cup \partial X \,|\,
x \not\in B(x_0,r),\ d_X(\xi_{\alpha}(r),\xi_{x}(r))<\epsilon \}, $$
where $\xi_{x}:[0,d_X(x_0,x)]\rightarrow X$ is 
the geodesic (segment or ray) from $x_0$ to $x$.
Let $\epsilon_0>0$ be a constant.
Then the set 
$$\{U_{X\cup\partial X}(\alpha;r,\epsilon_0)\,|\,r>0 \} $$
is a neighborhood basis for $\alpha$ in $X\cup\partial X$.
\end{enumerate}
Here it is known that the topology on $X\cup \partial X$ 
is not dependent on the basepoint $x_0 \in X$ and 
$X\cup\partial X$ is a metrizable compactification of $X$.

Also 
for $\alpha\in\partial X$ and the geodesic ray $\xi_{\alpha}$ with 
$\xi_{\alpha}(0)=x_0$ and $\xi_{\alpha}(\infty)=\alpha$ 
and for $r>0$ and $\epsilon>0$, 
we define 
$$ U'_{X \cup \partial X}(\alpha;r,\epsilon)=
\{ x \in X \cup \partial X \,|\,
x \not\in B(x_0,r),\ d_X(\xi_{\alpha}(r),\Image\xi_{x})<\epsilon \}, $$
where $\xi_{x}:[0,d(x_0,x)]\rightarrow X$ is 
the geodesic (segment or ray) from $x_0$ to $x$.
Let $\epsilon_0>0$ be a constant.
Then the set 
$$\{U'_{X\cup\partial X}(\alpha;r,\epsilon_0)\,|\,r>0 \} $$
is also a neighborhood basis for $\alpha$ in $X\cup\partial X$ 
(cf.\ \cite[Lemma~4.2]{Ho000}).

Suppose that a group $G$ acts on a proper CAT(0) space $X$ by isometries.
For each element $g \in G$ and 
each geodesic ray $\xi:[0,\infty)\rightarrow X$, 
a map $g \xi:[0,\infty)\rightarrow X$ 
defined by $(g\xi)(t):=g(\xi(t))$ is also a geodesic ray.
For two geodesic rays $\xi$ and $\xi'$, 
if $\xi(\infty)=\xi'(\infty)$ 
then $g\xi(\infty)=g\xi'(\infty)$. 
Thus $g$ induces a homeomorphism of $\partial X$, 
and $G$ acts on $\partial X$ by homeomorphisms.
Here we note that 
if a sequence $\{x_i\,|\,i\in \N\}\subset X$ 
converges to $\alpha\in\partial X$ in $X\cup\partial X$, then 
for any $g\in G$, 
the sequence $\{gx_i\,|\,i\in \N\}\subset X$ 
converges to $g\alpha\in\partial X$ in $X\cup\partial X$.

\begin{Definition}
Let $(X,d_X)$ be a proper CAT(0) space and 
let $\{x_i\,|\,i\in\N\}\subset X$ be an unbounded sequence in $X$.
In this paper, we say that 
the sequence $\{x_i\,|\,i\in\N\}$ is a {\it Cauchy sequence in $X\cup\partial X$}, 
if there exists $\epsilon_0>0$ such that for any $r>0$, 
there is a number $i_0\in\N$ as 
$$ x_i \in U_{X\cup\partial X}(x_{i_0};r,\epsilon_0) $$
for any $i\ge i_0$.
Here 
$$ U_{X\cup\partial X}(x_{i_0};r,\epsilon_0)= 
\{ x \in X \,|\, x \not\in B(x_0,r),\ d_X(\xi_{x_{i_0}}(r),\xi_{x}(r))<\epsilon \}, $$
where $\xi_{z}$ is the geodesic segment from $x_0$ to $z$ in $X$.
\end{Definition}

We show the following lemma used later.

\begin{Lemma}\label{Lem1}
Let $(X,d_X)$ be a proper CAT(0) space and 
let $\{x_i\,|\,i\in\N\}\subset X$ be an unbounded sequence in $X$.
Then the sequence $\{x_i\,|\,i\in\N\}$ is a Cauchy sequence in $X\cup\partial X$ defined above 
if and only if 
the sequence $\{x_i\,|\,i\in\N\}$ converges to some point $\alpha\in\partial X$ in $X\cup\partial X$.
\end{Lemma}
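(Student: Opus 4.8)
My plan is to prove the two implications separately, noting that one direction is essentially immediate from the fact that $\{U_{X\cup\partial X}(\alpha;r,\epsilon_0)\,|\,r>0\}$ is a neighbourhood basis at $\alpha$ for any fixed $\epsilon_0>0$, while the other direction carries all the content.

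For the implication ``convergent $\Rightarrow$ Cauchy'' I would start from $x_i\to\alpha\in\partial X$, let $\xi_\alpha$ be the geodesic ray from $x_0$ to $\alpha$, and fix an arbitrary $\epsilon_0>0$ to serve as the witnessing constant. Given $r>0$, convergence puts $x_i$ eventually inside $U_{X\cup\partial X}(\alpha;r,\epsilon_0/2)$; in particular, for large $i$ the point $x_i$ lies outside $B(x_0,r)$ (so $\xi_{x_i}(r)$ is defined) and $d_X(\xi_\alpha(r),\xi_{x_i}(r))<\epsilon_0/2$. Taking $i_0$ to be such a threshold index and applying the triangle inequality at $\xi_\alpha(r)$ then gives $d_X(\xi_{x_{i_0}}(r),\xi_{x_i}(r))<\epsilon_0$ for all $i\ge i_0$, i.e.\ $x_i\in U_{X\cup\partial X}(x_{i_0};r,\epsilon_0)$. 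Since $r$ was arbitrary, this is exactly the Cauchy condition.

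The substantive direction is ``Cauchy $\Rightarrow$ convergent''. Here I would fix the witnessing constant $\epsilon_0>0$ and first observe that the defining condition forces $x_i\notin B(x_0,r)$ for all large $i$, for every $r>0$, hence $d_X(x_0,x_i)\to\infty$ and each geodesic $\xi_{x_i}$ is eventually defined on any prescribed interval $[0,r]$. The heart of the argument is to upgrade the single fixed-radius bound $\epsilon_0$ into arbitrarily fine control of the geodesics $[x_0,x_i]$ near $x_0$: given $t>0$ and $\eta>0$, set $r:=t\epsilon_0/\eta$, use the Cauchy condition at radius $r$ together with the triangle inequality to find $i_0$ (enlarged if necessary so that $d_X(x_0,x_j)\ge r$ for all $j\ge i_0$) with $d_X(\xi_{x_{i_0}}(r),\xi_{x_i}(r))<2\epsilon_0$ for all $i\ge i_0$, and then invoke convexity of $s\mapsto d_X(\xi_{x_{i_0}}(s),\xi_{x_i}(s))$ on $[0,r]$ (a standard property of CAT(0) spaces) together with its vanishing at $s=0$ to deduce $d_X(\xi_{x_{i_0}}(t),\xi_{x_i}(t))\le(t/r)\cdot 2\epsilon_0=2\eta$. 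Since $\eta$ is arbitrary, $\{\xi_{x_i}(t)\}_i$ is a Cauchy sequence in $X$ for each fixed $t$; as $X$ is proper, hence complete, it converges to a point $\xi(t)$. Because distances pass to the limit and $d_X(x_0,x_i)\to\infty$, the map $\xi\colon[0,\infty)\to X$ is a geodesic ray with $\xi(0)=x_0$, so $\xi=\xi_\alpha$ for $\alpha:=\xi(\infty)\in\partial X$. To finish I would verify $x_i\to\alpha$ directly: for each $r>0$, the point $x_i$ is eventually outside $B(x_0,r)$ and $\xi_{x_i}(r)\to\xi_\alpha(r)$, so $x_i$ eventually lies in $U_{X\cup\partial X}(\alpha;r,\epsilon_0)$, and since these sets form a neighbourhood basis at $\alpha$ we conclude $x_i\to\alpha$.

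I expect the convexity-rescaling estimate to be the only real obstacle: a uniform bound at a single radius is, on its own, too weak, and the trick is to read it off at an arbitrarily large radius $r$ and then contract it back along $[x_0,x_i]$ to the prescribed radius $t$. An alternative route — extracting, via the Arzel\`a--Ascoli theorem, a subsequence of the geodesics $[x_0,x_i]$ converging uniformly on compacta to a ray and then showing that all subsequential limits in $\partial X$ agree — would also work, but the direct argument sketched above avoids passing to subsequences.
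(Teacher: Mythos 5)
Your proposal is correct. The ``convergent $\Rightarrow$ Cauchy'' half is essentially the paper's argument verbatim (fix $\epsilon_0$, put $x_i$ eventually in $U_{X\cup\partial X}(\alpha;r,\epsilon_0/2)$, and apply the triangle inequality at $\xi_\alpha(r)$). The substantive half, however, follows a genuinely different route. The paper does not construct the limit ray at all: it uses compactness of $X\cup\partial X$ to extract from the unbounded sequence a subsequence converging to some $\alpha\in\partial X$, and then combines the Cauchy estimate at radius $r$ with the subsequence's proximity to $\xi_\alpha(r)$ to place the whole tail in $U_{X\cup\partial X}(\alpha;r,2\epsilon_0+1)$ --- the standard ``Cauchy plus convergent subsequence implies convergent'' device, with a fixed enlarged constant. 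You instead build the limit directly: the CAT(0) comparison (convexity of $s\mapsto d_X(\xi_{x_i}(s),\xi_{x_j}(s))$ vanishing at $s=0$) contracts the single-scale bound $\epsilon_0$ at a large radius $r$ down to an arbitrarily small bound at a fixed radius $t$, so $\{\xi_{x_i}(t)\}_i$ is Cauchy in the proper (hence complete) space $X$, and the pointwise limits assemble into the ray $\xi_\alpha$. Your approach avoids subsequences and does not lean on compactness of the bordification, at the price of invoking convexity and a limit-ray construction; the paper's is shorter and uses only the already-stated facts about the topology of $X\cup\partial X$. The only small point to tidy in a write-up is that your choice $r:=t\epsilon_0/\eta$ presupposes $\eta\le\epsilon_0$ so that $r\ge t$ (otherwise take $r=t$ directly, where the Cauchy condition already gives the bound $\epsilon_0<\eta$); this is harmless since only small $\eta$ matters.
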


\begin{proof}
We first show that 
if the sequence $\{x_i\,|\,i\in\N\}$ converges to some point $\alpha\in\partial X$ in $X\cup\partial X$, 
then $\{x_i\,|\,i\in\N\}$ is a Cauchy sequence in $X\cup\partial X$ defined above.

Suppose that $\{x_i\,|\,i\in\N\}$ converges to $\alpha\in\partial X$ in $X\cup\partial X$.
Let $\epsilon_0>0$.
Since the set 
$$\{U_{X\cup\partial X}(\alpha;r,\frac{\epsilon_0}{2})\,|\,r>0\}$$ 
is a neighborhood basis for $\alpha$ in $X\cup\partial X$, for each $r>0$, 
there exists a number $i_0\in\N$ such that 
$$x_i\in U_{X\cup\partial X}(\alpha;r,\frac{\epsilon_0}{2})$$ 
for any $i\ge i_0$.
Then for any $i\ge i_0$, 
\begin{align*}
d_X(\xi_{x_{i_0}}(r),\xi_{x}(r)) &\le 
d_X(\xi_{x_{i_0}}(r),\xi_{\alpha}(r)) + d_X(\xi_{\alpha}(r),\xi_{x}(r)) \\
&\le \frac{\epsilon_0}{2}+\frac{\epsilon_0}{2} \\
&=\epsilon_0.
\end{align*}
Hence 
$x_i \in U_{X\cup\partial X}(x_{i_0};r,\epsilon_0)$ for any $i\ge i_0$.
Thus the sequence $\{x_i\,|\,i\in\N\}$ is a Cauchy sequence in $X\cup\partial X$.

Next, we show that 
if $\{x_i\,|\,i\in\N\}$ is a Cauchy sequence in $X\cup\partial X$ defined above, 
then 
$\{x_i\,|\,i\in\N\}$ converges to some point $\alpha\in\partial X$ in $X\cup\partial X$.

Suppose that $\{x_i\,|\,i\in\N\}$ is a Cauchy sequence in $X\cup\partial X$.
Since the set $\{x_i\,|\,i\in\N\}$ is unbounded in $X$, 
there exists a limit point $\alpha\in \Cl\{x_i\,|\,i\in\N\} \cap \partial X$.
Here there exists a subsequence $\{x_{i_j}\,|\,j\in \N\}\subset \{x_i\,|\,i\in\N\}$ 
which converges to $\alpha$ in $X\cup\partial X$.

Then we show that 
the sequence $\{x_i\,|\,i\in\N\}$ converges to the point $\alpha\in\partial X$ in $X\cup\partial X$.

Since $\{x_i\,|\,i\in\N\}$ is a Cauchy sequence in $X\cup\partial X$, 
there exists $\epsilon_0>0$ such that for any $r>0$, 
there is a number $i_0\in\N$ as 
$x_i \in U_{X\cup\partial X}(x_{i_0};r,\epsilon_0)$ for any $i\ge i_0$, i.e., 
$d_X(\xi_{x_{i_0}}(r),\xi_{x_i}(r))\le \epsilon_0$ for any $i\ge i_0$.
Also since the subsequence $\{x_{i_j}\,|\,j\in \N\}$ converges to $\alpha$ in $X\cup\partial X$, 
there exists $i_{j_0}\ge i_0$ such that 
$x_{i_{j_0}}\in U_{X\cup\partial X}(\alpha;r,1)$, i.e., 
$d_X(\xi_{x_{i_{j_0}}}(r),\xi_{\alpha}(r))\le 1$.
Then for any $i\ge i_{j_0}$, 
\begin{align*}
d_X(\xi_{x_i}(r),\xi_{\alpha}(r)) &\le 
d_X(\xi_{x_i}(r),\xi_{x_{i_0}}(r)) + d_X(\xi_{x_{i_0}}(r),\xi_{x_{i_{j_0}}}(r)) 
+ d_X(\xi_{x_{i_{j_0}}}(r),\xi_{\alpha}(r)) \\
&\le \epsilon_0 + \epsilon_0 + 1 \\
&= 2\epsilon_0+1,
\end{align*}
since $i\ge i_{j_0}\ge i_0$.
Hence for any $r>0$, 
there exists a number $i_{j_0}\in\N$ such that for any $i\ge i_{j_0}$, 
$$x_i\in U_{X\cup\partial X}(\alpha;r,2\epsilon_0+1),$$ 
where $2\epsilon_0+1$ is a constant.
Thus the sequence $\{x_i\,|\,i\in\N\}$ converges 
to the point $\alpha\in\partial X$ in $X\cup\partial X$.
\end{proof}

\section{Proof of Theorem~\ref{Thm1}}

In this section, we prove Theorem~\ref{Thm1}.

We suppose that a group $G$ acts geometrically on two CAT(0) spaces $(X,d_X)$ and $(Y,d_Y)$.
Let $x_0\in X$ and $y_0\in Y$.
Now we suppose that the condition $(*)$ holds; that is, 
\begin{enumerate}
\item[$(*)$] 
there exist constants $N>0$ and $M>0$ such that $GB(x_0,N)=X$, $GB(y_0,M)=Y$ and 
for any $g,a\in G$, 
if $[x_0,gx_0] \cap B(ax_0,N)\neq\emptyset$ in $X$ then 
$[y_0,gy_0] \cap B(ay_0,M)\neq\emptyset$ in $Y$.
\end{enumerate}

Our goal in this section is to show that 
the quasi-isometry $\phi:Gx_0\rightarrow Gy_0$ defined by $\phi(gx_0)=gy_0$ 
continuously extends to a $G$-equivariant homeomorphism 
of the boundaries $\partial X$ and $\partial Y$.

Since the map $\phi:Gx_0\rightarrow Gy_0$ defined by $\phi(gx_0)=gy_0$ 
is a quasi-isometry (cf.\ \cite[p.138]{BH}, \cite{Gr}, \cite{Gr0}), 
there exist constants $\lambda>0$ and $C>0$ such that 
$$ \frac{1}{\lambda} d_Y(gy_0,hy_0)-C \le d_X(gx_0,hx_0) \le \lambda d_Y(gy_0,hy_0)+C $$
for any $g,h\in G$.

We first show the following.

\begin{Proposition}\label{Prop2-1}
Let $\{g_i\}\subset G$ be a sequence.
If $\{g_ix_0\}\subset X$ is a Cauchy sequence in $X\cup \partial X$ defined in Section~2, 
then 
$\{g_iy_0\}\subset Y$ is also a Cauchy sequence in $Y\cup \partial Y$.
\end{Proposition}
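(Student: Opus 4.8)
The plan is to pass through Lemma~\ref{Lem1} and then to argue by subsequences inside the compactification $Y\cup\partial Y$. First I would use Lemma~\ref{Lem1} to restate the hypothesis as $g_ix_0\to\alpha$ for some $\alpha\in\partial X$, and to reduce the conclusion to showing that $\{g_iy_0\}$ converges to a point of $\partial Y$. The quasi-isometry inequality gives $d_Y(y_0,g_iy_0)\ge\lambda^{-1}(d_X(x_0,g_ix_0)-C)\to\infty$, so $\{g_iy_0\}$ is unbounded and each of its convergent subsequences has its limit in $\partial Y$; since $Y\cup\partial Y$ is compact metrizable, it suffices to prove that any two convergent subsequences of $\{g_iy_0\}$ have the same limit.

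So suppose $g_{i_k}y_0\to\beta$ and $g_{j_k}y_0\to\beta'$ with $\beta,\beta'\in\partial Y$, and let $\eta,\eta'\colon[0,\infty)\to Y$ be the geodesic rays from $y_0$ representing $\beta,\beta'$. I would begin with a mild strengthening of $(*)$: using $GB(x_0,N)=X$ and the quasi-isometry inequality one produces a constant $M'>0$ so that, for all $g,a\in G$, $[x_0,gx_0]\cap B(ax_0,N+1)\ne\emptyset$ implies $[y_0,gy_0]\cap B(ay_0,M')\ne\emptyset$ (replace a witnessing point of the $X$-intersection by a nearby translate $bx_0$, apply $(*)$ to $b$, and bound $d_Y(ay_0,by_0)$ via the quasi-isometry). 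Next, for a large parameter $t$ choose $a=a(t)\in G$ with $d_X(\xi_\alpha(t),ax_0)\le N$. Since $g_ix_0\to\alpha$ forces the segments $[x_0,g_ix_0]$ to converge to $\xi_\alpha$ uniformly on compacta, for $k$ large $\xi_{g_{i_k}x_0}(t)$ lies in $B(ax_0,N+1)$, hence the strengthened $(*)$ yields $p_k\in[y_0,g_{i_k}y_0]$ and $q_k\in[y_0,g_{j_k}y_0]$ with $d_Y(p_k,ay_0)\le M'$ and $d_Y(q_k,ay_0)\le M'$. Letting $k\to\infty$, using that $[y_0,g_{i_k}y_0]\to\eta$ and $[y_0,g_{j_k}y_0]\to\eta'$ uniformly on compacta while $p_k,q_k$ stay in the compact ball $B(ay_0,M')$, I would extract (along a subsequence) limits $p=\eta(\sigma)$ and $q=\eta'(\tau)$ with $d_Y(p,ay_0)\le M'$ and $d_Y(q,ay_0)\le M'$; then $d_Y(p,q)\le 2M'$ and $|\sigma-\tau|\le 2M'$, whereas $\sigma,\tau\ge d_Y(y_0,ay_0)-M'\ge\lambda^{-1}(t-N-C)-M'$.

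The endgame is a convexity estimate. Letting $t\to\infty$, the previous step produces parameters $\sigma=\sigma(t)\to\infty$ with
\[
d_Y(\eta(\sigma),\eta'(\sigma))\le d_Y(\eta(\sigma),\eta'(\tau))+|\tau-\sigma|\le 4M'.
\]
The function $f(s)=d_Y(\eta(s),\eta'(s))$ is convex on $[0,\infty)$ with $f(0)=0$ (convexity of the CAT(0) metric, applied to two geodesics issuing from $y_0$), so $s\mapsto f(s)/s$ is non-decreasing; since $f(\sigma(t))/\sigma(t)\le 4M'/\sigma(t)\to0$, we get $f\equiv0$, i.e.\ $\eta=\eta'$ and $\beta=\beta'$. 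Hence $\{g_iy_0\}$ converges in $\partial Y$, and Lemma~\ref{Lem1} finishes the proof.

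The step I expect to be the main obstacle is the limiting argument in the middle paragraph --- upgrading ``$[y_0,g_{i_k}y_0]$ meets a fixed compact ball'' to ``the limit ray $\eta$ meets that ball''. This rests on the standard but slightly delicate facts that $g_{i_k}y_0\to\beta$ makes the segments $[y_0,g_{i_k}y_0]$ converge to $\xi_\beta$ uniformly on compact subsets of $[0,\infty)$, and that one may pass to a subsequence along which the near-points $p_k$ and their parameters $d_Y(y_0,p_k)$ converge simultaneously. The strengthening of $(*)$ and the closing convexity estimate are routine.
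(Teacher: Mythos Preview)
Your argument is correct, but the route is genuinely different from the paper's. The paper verifies the Cauchy condition in $Y\cup\partial Y$ directly: given $R>0$, it sets $r=\lambda(R+C+M)+N$, uses the Cauchy hypothesis in $X$ at scale $r$ to get $d_X(\xi_{g_{i_0}x_0}(r),\xi_{g_ix_0}(r))\le 1$ for $i\ge i_0$, picks $a,b\in G$ with $ax_0$ and $bx_0$ $N$-close to the two points $\xi_{g_{i_0}x_0}(r),\xi_{g_ix_0}(r)$, applies $(*)$ to each of $a,b$, and then bounds $d_Y(\xi_{g_{i_0}y_0}(R),\xi_{g_iy_0}(R))$ via the quasi-isometry by the explicit constant $M'=\lambda(2N+1)+2M+C$. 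No passage through Lemma~\ref{Lem1}, no subsequences, no CAT(0) convexity of $s\mapsto d_Y(\eta(s),\eta'(s))$.

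Your approach trades that explicit estimate for a softer compactness-plus-uniqueness argument: reduce via Lemma~\ref{Lem1} to convergence, use sequential compactness of $Y\cup\partial Y$, and show any two subsequential limits agree by the convexity endgame. Your ``mild strengthening of $(*)$'' is exactly the content of the paper's later Lemma~\ref{Lem2-4} (there used for injectivity); you are effectively front-loading that lemma. The limiting step you flagged is indeed the delicate point, and your justification (uniform-on-compacta convergence of $[y_0,g_{i_k}y_0]$ to $\xi_\beta$ plus boundedness of the parameters $d_Y(y_0,p_k)$) is sound. The paper's direct computation gives a cleaner constant and keeps Proposition~\ref{Prop2-1} independent of Lemma~\ref{Lem1}; your argument is less quantitative but arguably more geometric, and anticipates the injectivity proof.
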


\begin{proof}
Let $\{g_i\}\subset G$.
Suppose that $\{g_ix_0\}\subset X$ is a Cauchy sequence in $X\cup \partial X$.

To prove that $\{g_iy_0\}\subset Y$ is a Cauchy sequence in $Y\cup \partial Y$, 
we show that 
there exists $M'>0$ such that for any $R>0$, 
there is $i_0\in \N$ as 
$$ g_iy_0 \in U_{Y\cup\partial Y}(g_{i_0}y_0;R,M') $$
for any $i\ge i_0$.

Let $M'=\lambda(2N+1)+2M+C$ and let $R>0$.

Since $\{g_ix_0\}\subset X$ is a Cauchy sequence in $X\cup \partial X$, 
for $r=\lambda(R+C+M)+N$, 
there exists $i_0\in \N$ such that 
$$ g_ix_0 \in U_{X\cup\partial X}(g_{i_0}x_0;r,1) $$
for any $i\ge i_0$.

Then 
$$d_X(x_0,g_{i_0}x_0)\ge r, \ d_X(x_0,g_ix_0)\ge r, \ \text{and}\ 
d_X(\xi_{g_{i_0}x_0}(r),\xi_{g_ix_0}(r)) \le 1,$$
where $\xi_{g_{i_0}x_0}$ is the geodesic from $x_0$ to $g_{i_0}x_0$ and 
$\xi_{g_ix_0}$ is the geodesic from $x_0$ to $g_ix_0$ in $X$.

Since $GB(x_0,N)=X$, 
there exist $a,b\in G$ such that 
$d_X(ax_0,\xi_{g_{i_0}x_0}(r))\le N$ and $d_X(bx_0,\xi_{g_ix_0}(r))\le N$.
Then 
$$ [x_0,g_{i_0}x_0]\cap B(ax_0,N)\neq \emptyset \ \text{and} \ 
[x_0,g_ix_0]\cap B(bx_0,N)\neq \emptyset.$$
Hence by the condition $(*)$, 
$$ [y_0,g_{i_0}y_0]\cap B(ay_0,M)\neq \emptyset \ \text{and} \ 
[y_0,g_iy_0]\cap B(by_0,M)\neq \emptyset.$$
Thus 
$$\xi_{g_{i_0}y_0}(r'_0)\in [y_0,g_{i_0}y_0]\cap B(ay_0,M) \ \text{and} \ 
\xi_{g_iy_0}(r')\in [y_0,g_iy_0]\cap B(by_0,M)$$ 
for some $r'_0>0$ and $r'>0$.

To obtain that for any $i\ge i_0$, 
$$ g_iy_0 \in U_{Y\cup\partial Y}(g_{i_0}y_0;R,M'), $$
we show that 
$$ r'_0 \ge R, \ r'\ge R \ \text{and}\ 
d_Y(\xi_{g_{i_0}y_0}(r'_0),\xi_{g_iy_0}(r'))\le M'.$$

First, 
\begin{align*}
r'_0 &= d_Y(y_0,\xi_{g_{i_0}y_0}(r'_0)) \\
&\ge d_Y(y_0,ay_0)-M \\
&\ge \frac{1}{\lambda}d_X(x_0,ax_0)-C-M \\
&\ge \frac{1}{\lambda}(r-N)-C-M \\
&=R,
\end{align*}
because $d_X(x_0,ax_0)\ge r-N$ and $r=\lambda(R+C+M)+N$.

Similary, $r'\ge R$, 
because $d_X(x_0,bx_0)\ge r-N$ and $r=\lambda(R+C+M)+N$.

Also, 
\begin{align*}
d_Y(\xi_{g_{i_0}y_0}(r'_0),\xi_{g_iy_0}(r')) 
&\le d_Y(ay_0,by_0)+2M \\
&\le (\lambda d_X(ax_0,bx_0)+C)+2M \\
&\le \lambda(d_X(\xi_{g_{i_0}x_0}(r),\xi_{g_ix_0}(r))+2N)+C+2M \\
&\le \lambda(1+2N)+C+2M \\
&= M',
\end{align*}
because $d_X(\xi_{g_{i_0}x_0}(r),\xi_{g_ix_0}(r))\le 1$ and 
$M'=\lambda(2N+1)+2M+C$.

Thus 
$$ r'_0 \ge R, \ r'\ge R \ \text{and}\ 
d_Y(\xi_{g_{i_0}y_0}(r'_0),\xi_{g_iy_0}(r'))\le M'.$$
Hence 
$$ 
d_Y(\xi_{g_{i_0}y_0}(R),\xi_{g_iy_0}(R))\le 
d_Y(\xi_{g_{i_0}y_0}(r'_0),\xi_{g_iy_0}(r')) \le M',$$
since $Y$ is a CAT(0) space.
Also we obtain that 
$$d_Y(y_0,g_{i_0}y_0)\ge R \ \text{and} \ d_Y(y_0,g_iy_0)\ge R,$$
because $r'_0 \ge R$ and $r'\ge R$.

Thus 
$$ g_iy_0 \in U_{Y\cup\partial Y}(g_{i_0}y_0;R,M') $$
for any $i\ge i_0$.
Hence we obtain that 
$\{g_iy_0\}\subset Y$ is a Cauchy sequence in $Y\cup \partial Y$.
\end{proof}

Then we can define a map 
$\bar{\phi}:\partial X\rightarrow \partial Y$ 
as a continuous extension of the quasi-isometry 
$\phi:Gx_0\rightarrow Gy_0$ defined by $\phi(gx_0)=gy_0$ as follows:
For each $\alpha\in \partial X$, 
there exists a sequence $\{g_ix_0\}\subset Gx_0\subset X$ 
which converges to $\alpha$ in $X\cup\partial X$.
Then the sequence $\{g_ix_0\}\subset X$ is a Cauchy sequence in $X\cup \partial X$ 
by Lemma~\ref{Lem1}.
By Proposition~\ref{Prop2-1}, 
the sequence $\{g_iy_0\}\subset Y$ is also a Cauchy sequence in $Y\cup\partial Y$.
Hence by Lemma~\ref{Lem1}, 
the sequence $\{g_iy_0\}\subset Y$ converges to 
some point $\bar{\alpha}\in\partial Y$ in $Y\cup\partial Y$.
Then we define $\bar{\phi}(\alpha)=\bar{\alpha}$.

\begin{Proposition}\label{Prop2-2}
The map $\bar{\phi}:\partial X\rightarrow \partial Y$ is well-defined.
\end{Proposition}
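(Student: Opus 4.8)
The plan is to show that the map $\bar\phi:\partial X\to\partial Y$ does not depend on the choice of the approximating sequence $\{g_ix_0\}\subset Gx_0$ converging to $\alpha$. So suppose $\{g_ix_0\}$ and $\{h_ix_0\}$ are two sequences in $Gx_0$, both converging to the same $\alpha\in\partial X$ in $X\cup\partial X$. We must prove that the corresponding Cauchy sequences $\{g_iy_0\}$ and $\{h_iy_0\}$ in $Y\cup\partial Y$ converge to the same point of $\partial Y$.

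The key idea is to interleave the two sequences. Consider the combined sequence $z_1=g_1x_0,\ z_2=h_1x_0,\ z_3=g_2x_0,\ z_4=h_2x_0,\dots$ in $Gx_0\subset X$. Since both $\{g_ix_0\}$ and $\{h_ix_0\}$ converge to $\alpha$, the interleaved sequence $\{z_i\}$ also converges to $\alpha$ in $X\cup\partial X$, hence by Lemma~\ref{Lem1} it is a Cauchy sequence in $X\cup\partial X$. Writing $z_i=k_ix_0$ with $k_i\in G$, Proposition~\ref{Prop2-1} applies to $\{k_i\}$ and gives that $\{k_iy_0\}$ is a Cauchy sequence in $Y\cup\partial Y$; by Lemma~\ref{Lem1} again it converges to a single point $\beta\in\partial Y$. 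But $\{g_iy_0\}$ and $\{h_iy_0\}$ are subsequences of $\{k_iy_0\}$, so each of them converges to the same limit $\beta$. Therefore the point $\bar\alpha$ produced from $\{g_ix_0\}$ equals the point produced from $\{h_ix_0\}$, and $\bar\phi(\alpha)$ is well-defined.

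One small point to verify carefully is that interleaving two sequences both converging to $\alpha$ indeed yields a sequence converging to $\alpha$: given a neighborhood $U$ of $\alpha$ in $X\cup\partial X$, choose $i_0$ beyond which both $g_ix_0\in U$ and $h_ix_0\in U$; then all terms of $\{z_i\}$ with index at least $2i_0$ lie in $U$. This is elementary but should be stated. The remaining steps — that $\{k_iy_0\}$ is Cauchy and converges, and that a subsequence of a convergent sequence in the metrizable (hence sequentially well-behaved) compactification $Y\cup\partial Y$ has the same limit — are immediate from Proposition~\ref{Prop2-1} and Lemma~\ref{Lem1} together with the fact that $Y\cup\partial Y$ is a metrizable compactification. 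I do not anticipate a genuine obstacle here; the only thing to be mildly careful about is that $\bar\phi(\alpha)$ must be shown to actually lie in $\partial Y$ rather than in $Y$, but this is already handled by the construction preceding the proposition, since the limit of any Cauchy sequence in the sense of Section~2 is a boundary point by Lemma~\ref{Lem1}.

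Thus the proof structure is: (1) take two sequences in $Gx_0$ converging to $\alpha$; (2) interleave them into one sequence, still converging to $\alpha$; (3) apply Lemma~\ref{Lem1} to get a Cauchy sequence in $X\cup\partial X$; (4) apply Proposition~\ref{Prop2-1} to transfer to a Cauchy sequence in $Y\cup\partial Y$; (5) apply Lemma~\ref{Lem1} in $Y$ to get a single limit point $\beta\in\partial Y$; (6) observe the two original image sequences are subsequences, hence both converge to $\beta$, so $\bar\phi(\alpha)=\beta$ is independent of the choice. This same interleaving trick is the standard device for such well-definedness arguments and will be reused implicitly when proving continuity and equivariance of $\bar\phi$.
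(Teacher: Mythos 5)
Your proof is correct and follows essentially the same route as the paper: the paper also merges the two sequences into a single sequence $\{\tilde g_j x_0\}$ converging to $\alpha$, applies Lemma~\ref{Lem1} and Proposition~\ref{Prop2-1} to get a single limit in $\partial Y$, and concludes via the subsequence argument. Your explicit verification that the interleaved sequence still converges to $\alpha$ is a small detail the paper leaves implicit, but the argument is the same.
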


\begin{proof}
Let $\alpha\in \partial X$ and 
let $\{g_ix_0\},\{h_ix_0\} \subset Gx_0\subset X$ be two sequences 
which converge to $\alpha$ in $X\cup\partial X$.
As the argument above, by Lemma~\ref{Lem1} and Proposition~\ref{Prop2-1}, 
the sequence $\{g_iy_0\}\subset Y$ converges to 
some point $\bar{\alpha}\in\partial Y$ and 
the sequence $\{h_iy_0\}\subset Y$ converges to 
some point $\bar{\beta}\in\partial Y$ in $Y\cup\partial Y$.
Then we show that $\bar{\alpha}=\bar{\beta}$.

Here we can consider a sequence $\{\tilde{g}_jx_0\,|\,j\in\N\}\subset Gx_0\subset X$ 
such that 
$$\{\tilde{g}_jx_0\,|\,j\in\N\}=\{g_ix_0\,|\,i\in\N\}\cup \{h_ix_0\,|\,i\in\N\}$$
and the sequence $\{\tilde{g}_jx_0\}$ converges to $\alpha$ in $X\cup\partial X$.
Then the sequence $\{\tilde{g}_jx_0\}$ is a Cauchy sequence in $X\cup\partial X$ and 
the sequence $\{\tilde{g}_jy_0\}$ is also in $Y\cup\partial Y$ by Proposition~\ref{Prop2-1}.
Hence the sequence $\{\tilde{g}_jy_0\}$ converges to 
some point $\bar{\gamma}\in\partial Y$ in $Y\cup\partial Y$.
Here we note that 
the two sequences $\{g_iy_0\}$ and $\{h_iy_0\}$ are subsequences of $\{\tilde{g}_jy_0\}$.
Hence we obtain that $\bar{\alpha}=\bar{\beta}=\bar{\gamma}$.

Thus the map $\bar{\phi}:\partial X\rightarrow \partial Y$ defined as above 
is well-defined.
\end{proof}

Next, we show the following.

\begin{Proposition}\label{Prop2-3}
The map $\bar{\phi}:\partial X\rightarrow \partial Y$ is surjective.
\end{Proposition}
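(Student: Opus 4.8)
The plan is to deduce surjectivity from the compactness of $X\cup\partial X$ together with Lemma~\ref{Lem1} and Proposition~\ref{Prop2-1}, rather than from a converse of Proposition~\ref{Prop2-1}; note that such a converse is not immediately available because the condition~$(*)$ is not symmetric in $X$ and $Y$. Fix $\beta\in\partial Y$ and let $\xi_\beta:[0,\infty)\to Y$ be the geodesic ray with $\xi_\beta(0)=y_0$ and $\xi_\beta(\infty)=\beta$. Since $GB(y_0,M)=Y$, for each $i\in\N$ there is $g_i\in G$ with $d_Y(g_iy_0,\xi_\beta(i))\le M$; then the sequence $\{g_iy_0\}\subset Gy_0$ converges to $\beta$ in $Y\cup\partial Y$, and in particular it is unbounded in $Y$.

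Next I would transfer this sequence to $X$. By the quasi-isometry estimate $d_X(x_0,g_ix_0)\ge \frac{1}{\lambda}d_Y(y_0,g_iy_0)-C$ recalled at the start of the section, the sequence $\{g_ix_0\}$ is unbounded in $X$. Since $X\cup\partial X$ is a compact metrizable space, after first passing to a subsequence along which $d_X(x_0,g_ix_0)\to\infty$ and then extracting a convergent sub-subsequence, we obtain a subsequence $\{g_{i_j}x_0\}$ converging to some point $\alpha\in\partial X$ in $X\cup\partial X$. By Lemma~\ref{Lem1}, $\{g_{i_j}x_0\}$ is a Cauchy sequence in $X\cup\partial X$, so by Proposition~\ref{Prop2-1} the sequence $\{g_{i_j}y_0\}$ is a Cauchy sequence in $Y\cup\partial Y$, and hence, again by Lemma~\ref{Lem1}, it converges to some point of $\partial Y$. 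But $\{g_{i_j}y_0\}$ is a subsequence of $\{g_iy_0\}$, which converges to $\beta$; therefore $\{g_{i_j}y_0\}$ converges to $\beta$ as well. Applying the definition of $\bar\phi$ to the sequence $\{g_{i_j}x_0\}$, which converges to $\alpha$ and whose image $\{g_{i_j}y_0\}$ converges to $\beta$, we conclude $\bar\phi(\alpha)=\beta$. Since $\beta\in\partial Y$ was arbitrary, $\bar\phi$ is surjective.

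The only genuinely delicate point is the one flagged above: condition~$(*)$ only controls the passage from $X$ to $Y$, so the naive approach of showing that $\{g_ix_0\}$ is itself Cauchy in $X\cup\partial X$ is not available. Compactness of $X\cup\partial X$ circumvents this by forcing a convergent subsequence with limit in $\partial X$, and the well-definedness of $\bar\phi$ (Proposition~\ref{Prop2-2}) guarantees that the particular subsequence chosen is immaterial. Everything else is routine: one uses only that $X\cup\partial X$ and $Y\cup\partial Y$ are compact metrizable, that cocompactness of the $G$-action lets orbit points approximate any boundary point, and the quasi-isometry inequalities.
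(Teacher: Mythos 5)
Your proof is correct and follows essentially the same route as the paper: take a sequence $\{g_iy_0\}$ in the orbit converging to the given boundary point of $Y$, observe that $\{g_ix_0\}$ is unbounded in $X$, extract by compactness a subsequence $\{g_{i_j}x_0\}$ converging to some $\alpha\in\partial X$, and conclude $\bar{\phi}(\alpha)=\beta$ from the definition of $\bar{\phi}$ since $\{g_{i_j}y_0\}$ still converges to $\beta$. Your extra remarks (explicitly building the sequence via cocompactness, and flagging that condition $(*)$ is not symmetric, so a converse of Proposition~\ref{Prop2-1} is unavailable) are sound but do not change the argument.
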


\begin{proof}
Let $\bar{\alpha}\in \partial Y$.
There exists a sequence $\{g_iy_0\}\subset Gy_0 \subset Y$ 
which converges to $\bar{\alpha}$ in $Y\cup \partial Y$.
Then we consider the set $\{g_ix_0\,|\,i\in \N\}$ 
which is an unbounded subset of $X$.
Here 
$$ \Cl\{g_ix_0\,|\,i\in \N\} \cap \partial X \neq \emptyset,$$
and there exists a subsequence $\{g_{i_j}x_0\,|\,j\in\N\}\subset \{g_ix_0\}$ 
which converges to some point $\alpha \in \partial X$.
Then the sequence $\{g_{i_j}y_0\}$ converges to $\bar{\alpha}$ in $Y\cup \partial Y$, 
because $\{g_{i_j}y_0\}$ is a subsequence of the sequence $\{g_iy_0\}$ 
which converges to $\bar{\alpha}$ in $Y\cup \partial Y$.
Hence $\bar{\phi}(\alpha)=\bar{\alpha}$ 
by the definition of the map $\bar{\phi}$.
Thus the map $\bar{\phi}:\partial X\rightarrow \partial Y$ is surjective.
\end{proof}

Here we provide a lemma.

\begin{Lemma}\label{Lem2-4}
For any $\tilde{N}\ge N$, 
there exists $\tilde{M}>0$ such that $GB(y_0,\tilde{M})=Y$ and 
for any $g,a\in G$, 
if $[x_0,gx_0] \cap B(ax_0,\tilde{N})\neq\emptyset$ in $X$ then 
$[y_0,gy_0] \cap B(ay_0,\tilde{M})\neq\emptyset$ in $Y$.
\end{Lemma}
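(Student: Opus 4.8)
The plan is to derive the statement for $\tilde N$ from condition~$(*)$ for $N$ by a ``change of center'' argument, exploiting the cocompactness hypothesis $GB(x_0,N)=X$ to replace $a$ by a group element whose orbit point lies within distance $N$ of the relevant geodesic, and then transporting the resulting intersection in $Y$ back to the original center $ay_0$ using the quasi-isometry estimate fixed at the beginning of this section.

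First I would fix $\tilde N\ge N$ and suppose $g,a\in G$ satisfy $[x_0,gx_0]\cap B(ax_0,\tilde N)\neq\emptyset$. Choose a point $p\in[x_0,gx_0]\cap B(ax_0,\tilde N)$, so that $d_X(p,ax_0)\le\tilde N$. Since $GB(x_0,N)=X$, there is $b\in G$ with $d_X(p,bx_0)\le N$; then $p\in[x_0,gx_0]\cap B(bx_0,N)$, so condition~$(*)$ applies with $b$ in place of $a$ and yields a point $q\in[y_0,gy_0]\cap B(by_0,M)$.

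Next I would estimate $d_Y(ay_0,by_0)$. From $d_X(ax_0,bx_0)\le d_X(ax_0,p)+d_X(p,bx_0)\le\tilde N+N$ together with the quasi-isometry inequality $\tfrac1\lambda d_Y(ay_0,by_0)-C\le d_X(ax_0,bx_0)$, one gets $d_Y(ay_0,by_0)\le\lambda(\tilde N+N+C)$. Hence $d_Y(q,ay_0)\le d_Y(q,by_0)+d_Y(by_0,ay_0)\le M+\lambda(\tilde N+N+C)$, so with $\tilde M:=M+\lambda(\tilde N+N+C)$ we obtain $q\in[y_0,gy_0]\cap B(ay_0,\tilde M)$, which is therefore nonempty. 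Finally, $GB(y_0,\tilde M)=Y$ holds automatically because $\tilde M\ge M$ and $GB(y_0,M)=Y$ by~$(*)$, and clearly $\tilde M>0$.

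The argument is essentially bookkeeping with the triangle inequality and the quasi-isometry constants $\lambda,C$, so there is no real obstacle; the only points requiring care are that one must apply $(*)$ to $b$ rather than to $a$, and that one uses the side of the quasi-isometry inequality that bounds $d_Y$ in terms of $d_X$ (in the correct direction).
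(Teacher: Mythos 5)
Your proof is correct and follows essentially the same route as the paper: replace $a$ by a nearby orbit point $b$ (the paper's $a'$) using $GB(x_0,N)=X$, apply condition~$(*)$ to $b$, and transport the resulting point of $[y_0,gy_0]$ back to $ay_0$ via the quasi-isometry estimate. The only difference is immaterial bookkeeping in the constant $\tilde M$ (your handling of the quasi-isometry inequality is, if anything, slightly more careful than the paper's).
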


\begin{proof}
For $\tilde{N}\ge N$, 
we put $\tilde{M}=\lambda(N+\tilde{N})+C+M$.

Let $g,a\in G$ 
as $[x_0,gx_0] \cap B(ax_0,\tilde{N})\neq\emptyset$ in $X$.
Then there exists a point $x_1\in [x_0,gx_0] \cap B(ax_0,\tilde{N})$.
Since $GB(x_0,N)=X$, 
there exists $a'\in G$ such that $x_1 \in B(a'x_0,N)$.
Then $x_1 \in [x_0,gx_0]\cap B(a'x_0,N)$ and 
$[x_0,gx_0]\cap B(a'x_0,N)\neq\emptyset$ in $X$.
By the condition~$(*)$, 
$[y_0,gy_0]\cap B(a'y_0,M)\neq\emptyset$ in $Y$.
Hence $d_Y(a'y_0,[y_0,gy_0])\le M$.
Here we note that 
\begin{align*}
d_Y(a'y_0,ay_0) &\le \lambda d_X(a'x_0,ax_0)+C \\
&\le \lambda(d_X(a'x_0,x_1)+d_X(x_1,ax_0))+C \\
&\le \lambda(N+\tilde{N})+C.
\end{align*}
Hence 
\begin{align*}
d_Y(ay_0,[y_0,gy_0]) &\le d_Y(ay_0,a'y_0)+d_Y(a'y_0,[y_0,gy_0]) \\
&\le \lambda(N+\tilde{N})+C+M \\
&=\tilde{M}.
\end{align*}
Thus we obtain that 
$[y_0,gy_0] \cap B(ay_0,\tilde{M})\neq\emptyset$ in $Y$.
\end{proof}

Let $\tilde{N}=2N$.
By Lemma~\ref{Lem2-4}, 
there exists $\tilde{M}>0$ such that $GB(y_0,\tilde{M})=Y$ and 
for any $g,a\in G$, 
if $[x_0,gx_0] \cap B(ax_0,\tilde{N})\neq\emptyset$ in $X$ then 
$[y_0,gy_0] \cap B(ay_0,\tilde{M})\neq\emptyset$ in $Y$.

Here we show the following technical lemma.

\begin{Lemma}\label{Lem2-5}
Let $\alpha\in\partial X$ and let $\xi_{\alpha}:[0,\infty)\rightarrow X$ 
be the geodesic ray in $X$ such that 
$\xi_{\alpha}(0)=x_0$ and $\xi_{\alpha}(\infty)=\alpha$.
Let $\{g_ix_0\}\subset Gx_0 \subset X$ be a sequence 
which converges to $\alpha$ in $X\cup\partial X$ 
such that $d_X(g_ix_0,\xi_{\alpha}(i))\le N$ for any $i\in\N$ 
(since $GB(x_0,N)=X$, we can take such a sequence).
Then 
\begin{enumerate}
\item[$(1)$] $d_X(g_ix_0,[x_0,g_jx_0]) \le \tilde{N}$ for any $i,j\in \N$ with $i<j$,
\item[$(2)$] $d_Y(g_iy_0,[y_0,g_jy_0]) \le \tilde{M}$ for any $i,j\in \N$ with $i<j$,
\item[$(3)$] $d_Y(g_iy_0,\Image \xi_{\bar{\alpha}}) \le \tilde{M}+1$ for any $i\in\N$,
\item[$(4)$] $d_X(g_ix_0,g_{i+1}x_0)\le 2N+1$ for any $i\in\N$,
\item[$(5)$] $d_Y(g_iy_0,g_{i+1}y_0)\le \lambda(2N+1)+C$ for any $i\in\N$, and 
\item[$(6)$] $\Image\xi_{\bar{\alpha}} \subset 
\bigcup\{B(g_iy_0,3(\tilde{M}+1)+\lambda(2N+1)+C)\,|\,i\in\N \}$.
\end{enumerate}
Here $\bar{\alpha}=\bar{\phi}(\alpha)$ and 
$\xi_{\bar{\alpha}}:[0,\infty)\rightarrow Y$ is the geodesic ray in $Y$ such that 
$\xi_{\bar{\alpha}}(0)=y_0$ and $\xi_{\bar{\alpha}}(\infty)=\bar{\alpha}$.
\end{Lemma}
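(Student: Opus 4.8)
The plan is to establish the six items essentially in the order listed, since each one feeds the next. I would start from the sequence $\{g_ix_0\}$ with $d_X(g_ix_0,\xi_\alpha(i))\le N$; its existence is guaranteed because $GB(x_0,N)=X$. For (1), fix $i<j$ and note that $\xi_\alpha(i)$ lies on the geodesic ray $\xi_\alpha$; since $d_X(g_jx_0,\xi_\alpha(j))\le N$ and $j>i$, the point $\xi_\alpha(i)$ lies within distance $N$ of the geodesic $[x_0,\xi_\alpha(j)]=\xi_\alpha([0,j])$, and by a standard CAT(0) convexity / fellow-traveling estimate comparing $[x_0,\xi_\alpha(j)]$ with $[x_0,g_jx_0]$ (their endpoints are within $N$), one gets $d_X(\xi_\alpha(i),[x_0,g_jx_0])\le N$; combined with $d_X(g_ix_0,\xi_\alpha(i))\le N$ this gives $d_X(g_ix_0,[x_0,g_jx_0])\le 2N=\tilde N$. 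Then (2) is immediate: from (1), $[x_0,g_jx_0]\cap B(g_ix_0,\tilde N)\neq\emptyset$, so taking $g=g_i^{-1}g_j$ and $a=g_i^{-1}g_i=1$ — or more precisely applying the $G$-action to translate so the hypothesis of Lemma~\ref{Lem2-4} applies with $a=g_i$, $g=g_j$ — Lemma~\ref{Lem2-4} (with $\tilde N=2N$) yields $[y_0,g_jy_0]\cap B(g_iy_0,\tilde M)\neq\emptyset$, i.e. $d_Y(g_iy_0,[y_0,g_jy_0])\le\tilde M$.

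For (3), I would use that $\{g_jy_0\}$ converges to $\bar\alpha$ (this is the definition of $\bar\phi(\alpha)$ together with Lemma~\ref{Lem1} and Proposition~\ref{Prop2-1}); fixing $i$ and letting $j\to\infty$, the geodesics $[y_0,g_jy_0]$ converge uniformly on compacta to $\xi_{\bar\alpha}$, so the bound $d_Y(g_iy_0,[y_0,g_jy_0])\le\tilde M$ from (2) passes to the limit with an extra slack of $1$, giving $d_Y(g_iy_0,\operatorname{Im}\xi_{\bar\alpha})\le\tilde M+1$. Items (4) and (5) are routine: $d_X(g_ix_0,g_{i+1}x_0)\le d_X(g_ix_0,\xi_\alpha(i))+d_X(\xi_\alpha(i),\xi_\alpha(i+1))+d_X(\xi_\alpha(i+1),g_{i+1}x_0)\le N+1+N=2N+1$, and then (5) follows by pushing this through the quasi-isometry inequality $d_Y(g_iy_0,g_{i+1}y_0)\le\lambda d_X(g_ix_0,g_{i+1}x_0)+C\le\lambda(2N+1)+C$.

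Finally, (6) is the place where the pieces are assembled, and I expect the bookkeeping there to be the main (though still routine) obstacle. The idea: given any point $p=\xi_{\bar\alpha}(t)$ on the ray, I want an index $i$ with $d_Y(p,g_iy_0)$ bounded by the stated constant. By (3), each $g_iy_0$ is within $\tilde M+1$ of the ray; by (5) consecutive $g_iy_0$ move by at most $\lambda(2N+1)+C$, so the "shadows" of the points $g_iy_0$ onto $\xi_{\bar\alpha}$ form a coarsely connected set along the ray that starts near $y_0$ (for small $i$) and is unbounded (since $g_iy_0\to\bar\alpha$). Hence for any $t$ there is some $i$ whose shadow $\xi_{\bar\alpha}(s_i)$ satisfies $|s_i-t|\le (\tilde M+1)+\tfrac12(\lambda(2N+1)+C)$ or so — one controls the gap between consecutive shadows by $2(\tilde M+1)+\lambda(2N+1)+C$ via the triangle inequality through $g_iy_0$ and $g_{i+1}y_0$. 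Then $d_Y(p,g_iy_0)\le d_Y(\xi_{\bar\alpha}(t),\xi_{\bar\alpha}(s_i))+d_Y(\xi_{\bar\alpha}(s_i),g_iy_0)\le \big(2(\tilde M+1)+\lambda(2N+1)+C\big)+(\tilde M+1)=3(\tilde M+1)+\lambda(2N+1)+C$, which is exactly the claimed constant. I would write this last step carefully as a discrete intermediate-value argument along $\N$, using that the map $i\mapsto(\text{nearest parameter on }\xi_{\bar\alpha})$ has bounded increments and unbounded range.
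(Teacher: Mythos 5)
Your proposal is correct and follows essentially the same route as the paper's proof: convexity of the CAT(0) metric for (1), Lemma~\ref{Lem2-4} applied with $a=g_i$ and $g=g_j$ for (2), passing the bound from (2) to the limit along $[y_0,g_jy_0]\to\xi_{\bar{\alpha}}$ for (3), the triangle inequality and the quasi-isometry constants for (4) and (5), and the bounded-gap argument along the projections $\xi_{\bar{\alpha}}(r_i)$ for (6), arriving at the same constant $3(\tilde{M}+1)+\lambda(2N+1)+C$. The only cosmetic difference is that the paper realizes your ``uniform convergence on compacta'' step in (3) by an explicit neighborhood-basis estimate; the substance is identical.
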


\begin{proof}
(1) 
For any $i,j\in \N$ with $i<j$, 
\begin{align*}
d_X(g_ix_0,[x_0,g_jx_0]) 
&\le d_X(g_ix_0,\xi_{\alpha}(i))+d_X(\xi_{\alpha}(i),[x_0,g_jx_0]) \\
&\le N+N =2N \\
&=\tilde{N},
\end{align*}
where we obtain the inequality $d_X(\xi_{\alpha}(i),[x_0,g_jx_0])\le N$, 
since $d_X(g_jx_0,\xi_{\alpha}(j))\le N$, $i<j$ and $X$ is a CAT(0) space.

(2) 
By Lemma~\ref{Lem2-4} and the definition of $\tilde{M}$, 
we obtain that 
$d_Y(g_iy_0,[y_0,g_jy_0]) \le \tilde{M}$ for any $i,j\in \N$ with $i<j$ from (1).

(3) 
We note that 
the sequence $\{g_iy_0\}$ converges to $\bar{\alpha}$ 
by the definition of the map $\bar{\phi}:\partial X\rightarrow \partial Y$.

Let $i\in\N$ and let $R=d_Y(y_0,g_iy_0)$.
Since the sequence $\{g_jy_0\}$ converges to $\bar{\alpha}$, 
there exists $j_0\in N$ such that 
$$d_Y(\xi_{\bar{\alpha}}(R),\xi_{g_jy_0}(R))<1$$
for any $j\ge j_0$, 
because the set 
$$\{U_{Y\cup\partial Y}(\bar{\alpha};r,1)\,|\,r>0\}$$ 
defined in Section~2 
is a neighborhood basis for $\bar{\alpha}$ in $Y\cup\partial Y$.

Let $j\in \N$ with $j>i$ and $j>j_0$.
Since $i<j$, we obtain that $d_Y(g_iy_0,[y_0,g_jy_0]) \le \tilde{M}$ by (2).
Hence there exists $r>0$ such that 
$d_Y(g_iy_0,\xi_{g_jy_0}(r)) \le \tilde{M}$.
Here we note that $r \le R$ by \cite[Lemma~4.1]{Ho000}
and we can obtain that 
$$d_Y(\xi_{\bar{\alpha}}(r),\xi_{g_jy_0}(r))<d_Y(\xi_{\bar{\alpha}}(R),\xi_{g_jy_0}(R))<1,$$
since $Y$ is a CAT(0) space.
Then 
\begin{align*}
d_Y(g_iy_0,\Image \xi_{\bar{\alpha}}) 
&\le d_Y(g_iy_0,\xi_{g_jy_0}(r))+d_Y(\xi_{g_jy_0}(r),\Image \xi_{\bar{\alpha}}) \\
&< \tilde{M}+1.
\end{align*}
Hence
$d_Y(g_iy_0,\Image \xi_{\bar{\alpha}}) \le \tilde{M}+1$ for any $i\in\N$.

(4) 
We obtain that $d_X(g_ix_0,g_{i+1}x_0)\le 2N+1$ for any $i\in\N$, 
because 
\begin{align*}
d_X(g_ix_0,g_{i+1}x_0) &\le 
d_X(g_ix_0,\xi_{\alpha}(i))+d_X(\xi_{\alpha}(i),\xi_{\alpha}(i+1))+d_X(\xi_{\alpha}(i+1),g_{i+1}x_0) \\
&\le N+1+N \\
&=2N+1,
\end{align*}
since $d_X(g_ix_0,\xi_{\alpha}(i))\le N$ for any $i\in\N$ 
by the definition of the sequence $\{g_ix_0\}$.

(5) 
Since the map $\phi:Gx_0\rightarrow Gy_0 \ (gx_0\mapsto gy_0)$ is a quasi-isometry, 
we obtain that 
$d_Y(g_iy_0,g_{i+1}y_0)\le \lambda(2N+1)+C$ for any $i\in\N$ by (4).

(6) 
For each $i\in\N$, there exists $r_i>0$ such that 
$d_Y(g_iy_0,\xi_{\bar{\alpha}}(r_i))\le \tilde{M}+1$ by (3).
Then by (5), 
\begin{align*}
d_Y(\xi_{\bar{\alpha}}(r_i),\xi_{\bar{\alpha}}(r_{i+1})) &\le 
d_Y(\xi_{\bar{\alpha}}(r_i),g_iy_0)+d_Y(g_iy_0,g_{i+1}y_0)+
d_Y(g_{i+1}y_0,\xi_{\bar{\alpha}}(r_{i+1})) \\
&\le (\tilde{M}+1)+(\lambda(2N+1)+C)+(\tilde{M}+1) \\
&= 2(\tilde{M}+1)+\lambda(2N+1)+C.
\end{align*}
Hence we obtain that 
$$ \Image\xi_{\bar{\alpha}} \subset 
\bigcup\{B(g_iy_0,3(\tilde{M}+1)+\lambda(2N+1)+C)\,|\,i\in\N \}.$$
\end{proof}

Now we show the following.

\begin{Proposition}\label{Prop2-6}
The map $\bar{\phi}:\partial X\rightarrow \partial Y$ is injective.
\end{Proposition}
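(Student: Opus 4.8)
The plan is to prove the contrapositive: given $\alpha,\beta\in\partial X$ with $\bar{\phi}(\alpha)=\bar{\phi}(\beta)$, I will deduce $\alpha=\beta$. Write $\xi_{\alpha},\xi_{\beta}:[0,\infty)\rightarrow X$ for the geodesic rays from $x_0$ with $\xi_{\alpha}(\infty)=\alpha$ and $\xi_{\beta}(\infty)=\beta$, and fix sequences $\{g_ix_0\},\{h_ix_0\}\subset Gx_0$ with $d_X(g_ix_0,\xi_{\alpha}(i))\le N$ and $d_X(h_ix_0,\xi_{\beta}(i))\le N$ for all $i$ (possible since $GB(x_0,N)=X$), exactly of the type considered in Lemma~\ref{Lem2-5}. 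These sequences converge to $\alpha$ and $\beta$ respectively, so by the construction of $\bar{\phi}$ together with Proposition~\ref{Prop2-2} the sequences $\{g_iy_0\}$ and $\{h_iy_0\}$ converge in $Y\cup\partial Y$ to $\bar{\alpha}:=\bar{\phi}(\alpha)$ and $\bar{\beta}:=\bar{\phi}(\beta)$. Now assume $\bar{\alpha}=\bar{\beta}$; then $\xi_{\bar{\alpha}}=\xi_{\bar{\beta}}$ by uniqueness of the geodesic ray from $y_0$ to a given boundary point.

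The first substantive step is to show, via Lemma~\ref{Lem2-5}, that the two orbit sequences in $Y$ coarsely coincide. By part~(3) of Lemma~\ref{Lem2-5}, each $g_iy_0$ lies within $\tilde{M}+1$ of $\Image\xi_{\bar{\alpha}}$; by part~(6) applied to the sequence $\{h_jx_0\}$ (whose associated boundary point is $\bar{\beta}=\bar{\alpha}$), the set $\Image\xi_{\bar{\alpha}}=\Image\xi_{\bar{\beta}}$ is covered by the balls $B(h_jy_0,K)$ with $K=3(\tilde{M}+1)+\lambda(2N+1)+C$. Hence for every $i$ there is some $j=j(i)$ with $d_Y(g_iy_0,h_{j(i)}y_0)\le \tilde{M}+1+K=:K_0$, and since $\phi$ is a quasi-isometry this gives $d_X(g_ix_0,h_{j(i)}x_0)\le \lambda K_0+C=:K_1$ for all $i$.

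Next I transfer this back to the rays in $X$ and finish by convexity. Using $d_X(g_ix_0,\xi_{\alpha}(i))\le N$ and $d_X(h_{j(i)}x_0,\xi_{\beta}(j(i)))\le N$, I obtain $d_X(\xi_{\alpha}(i),\xi_{\beta}(j(i)))\le K_1+2N$ for all $i$; comparing distances to $x_0$ gives $|i-j(i)|\le d_X(\xi_{\alpha}(i),\xi_{\beta}(j(i)))\le K_1+2N$, and then the triangle inequality along $\xi_{\beta}$ yields $d_X(\xi_{\alpha}(i),\xi_{\beta}(i))\le 2(K_1+2N)$ for all $i\in\N$. Since $X$ is a CAT(0) space, the function $t\mapsto d_X(\xi_{\alpha}(t),\xi_{\beta}(t))$ is convex on $[0,\infty)$ and vanishes at $t=0$, so for each fixed $s$ and each integer $i>s$ we get $d_X(\xi_{\alpha}(s),\xi_{\beta}(s))\le (s/i)\,d_X(\xi_{\alpha}(i),\xi_{\beta}(i))\le (s/i)\cdot 2(K_1+2N)\to 0$ as $i\to\infty$; hence $\xi_{\alpha}=\xi_{\beta}$, i.e.\ $\alpha=\beta$, which proves that $\bar{\phi}$ is injective.

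The main obstacle is the first substantive step: one really needs Lemma~\ref{Lem2-5}(6), the coarse covering of the limit ray $\xi_{\bar{\alpha}}$ by the orbit points $\{h_jy_0\}$, in order to ``invert'' the construction of $\bar{\phi}$. Merely knowing that each $g_iy_0$ is close to the ray is not enough; one must also know the ray stays uniformly close to the $h$-orbit, and it is the combination of parts~(3) and~(6) that produces the uniform bound $K_0$. Everything after that is a routine chain of triangle inequalities together with the standard convexity of the distance between two geodesic rays sharing a basepoint in a CAT(0) space.
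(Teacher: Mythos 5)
Your proof is correct, and it is essentially the paper's argument run in the reverse logical direction: the paper assumes $\alpha\neq\alpha'$ and pushes ``the orbit points are far apart in $X$'' through the lower quasi-isometry bound and Lemma~\ref{Lem2-5}(3),(6$'$) to conclude that $\xi_{\bar{\alpha}}$ and $\xi_{\bar{\alpha'}}$ are not asymptotic, whereas you assume $\bar{\phi}(\alpha)=\bar{\phi}(\beta)$ and push ``the orbit points are close in $Y$'' (obtained from exactly the same combination of Lemma~\ref{Lem2-5}(3) and (6)) through the upper quasi-isometry bound to conclude that $\xi_{\alpha}$ and $\xi_{\beta}$ stay a bounded distance apart in $X$. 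The only genuinely new ingredient on your side is the closing step, where you invoke convexity of $t\mapsto d_X(\xi_{\alpha}(t),\xi_{\beta}(t))$ together with $f(0)=0$ to upgrade ``boundedly close rays'' to ``equal rays''; the paper instead lets its lower bound grow without bound and concludes non-asymptoticity directly. Both finishing moves are standard CAT(0) facts, so the two proofs are of equal strength; yours is arguably slightly cleaner in that all the delicate estimates are confined to a single uniform constant $K_1$ rather than a quantifier over arbitrarily large $t$. (One cosmetic remark: what you prove is the definition of injectivity directly, not its contrapositive --- the paper's version is the contrapositive one.)
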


\begin{proof}
Let $\alpha,\alpha'\in\partial X$, and 
let $\xi_{\alpha}:[0,\infty)\rightarrow X$ and $\xi_{\alpha'}:[0,\infty)\rightarrow X$ 
be the geodesic rays in $X$ such that 
$\xi_{\alpha}(0)=\xi_{\alpha'}(0)=x_0$, 
$\xi_{\alpha}(\infty)=\alpha$ and $\xi_{\alpha'}(\infty)=\alpha'$.
Let 
$\{g_ix_0\},\{g'_ix_0\}\subset Gx_0 \subset X$ be sequences such that 
$d_X(g_ix_0,\xi_{\alpha}(i))\le N$ and $d_X(g'_ix_0,\xi_{\alpha'}(i))\le N$.
Then 
the sequence $\{g_ix_0\}$ converges to $\alpha$ and 
the sequence $\{g'_ix_0\}$ converges to $\alpha'$ in $X\cup\partial X$.

Let $\bar{\alpha}=\bar{\phi}(\alpha)$ and $\bar{{\alpha'}}=\bar{\phi}(\alpha')$.
Also let $\xi_{\bar{\alpha}}:[0,\infty)\rightarrow Y$ and 
$\xi_{\bar{{\alpha'}}}:[0,\infty)\rightarrow Y$ 
be the geodesic rays in $Y$ such that 
$\xi_{\bar{\alpha}}(0)=\xi_{\bar{{\alpha'}}}(0)=y_0$, 
$\xi_{\bar{\alpha}}(\infty)=\bar{\alpha}$ and 
$\xi_{\bar{{\alpha'}}}(\infty)=\bar{{\alpha'}}$.

Then by Lemma~\ref{Lem2-5}, 
\begin{enumerate}
\item[$(1)$] $d_X(g_ix_0,[x_0,g_jx_0]) \le \tilde{N}$ for any $i,j\in \N$ with $i<j$,
\item[$(2)$] $d_Y(g_iy_0,[y_0,g_jy_0]) \le \tilde{M}$ for any $i,j\in \N$ with $i<j$,
\item[$(3)$] $d_Y(g_iy_0,\Image \xi_{\bar{\alpha}}) \le \tilde{M}+1$ for any $i\in\N$,
\item[$(4)$] $d_X(g_ix_0,g_{i+1}x_0)\le 2N+1$ for any $i\in\N$,
\item[$(5)$] $d_Y(g_iy_0,g_{i+1}y_0)\le \lambda(2N+1)+C$ for any $i\in\N$,
\item[$(6)$] $\Image\xi_{\bar{\alpha}} \subset 
\bigcup\{B(g_iy_0,3(\tilde{M}+1)+\lambda(2N+1)+C)\,|\,i\in\N \}$, 
\end{enumerate}
and 
\begin{enumerate}
\item[$(1')$] $d_X(g'_ix_0,[x_0,g'_jx_0]) \le \tilde{N}$ for any $i,j\in \N$ with $i<j$,
\item[$(2')$] $d_Y(g'_iy_0,[y_0,g'_jy_0]) \le \tilde{M}$ for any $i,j\in \N$ with $i<j$,
\item[$(3')$] $d_Y(g'_iy_0,\Image \xi_{\bar{{\alpha'}}}) \le \tilde{M}+1$ for any $i\in\N$,
\item[$(4')$] $d_X(g'_ix_0,g'_{i+1}x_0)\le 2N+1$ for any $i\in\N$,
\item[$(5')$] $d_Y(g'_iy_0,g'_{i+1}y_0)\le \lambda(2N+1)+C$ for any $i\in\N$,
\item[$(6')$] $\Image\xi_{\bar{{\alpha'}}} \subset 
\bigcup\{B(g'_iy_0,3(\tilde{M}+1)+\lambda(2N+1)+C)\,|\,i\in\N \}$.
\end{enumerate}

To prove that 
the map $\bar{\phi}:\partial X\rightarrow \partial Y$ is injective, 
we show that if $\alpha\neq\alpha'$ then $\bar{\alpha}\neq\bar{{\alpha'}}$.

We suppose that $\alpha\neq\alpha'$.
Then the geodesic rays $\xi_{\alpha}$ and $\xi_{\alpha'}$ are not asymptotic.
Hence for any $t>0$, 
there exists $r_0>0$ such that 
$d_X(\xi_{\alpha}(r_0),\Image \xi_{\alpha'})>t$.
Then for $i_0\in\N$ with $i_0\ge r_0$,
\begin{align*}
d_X(g_{i_0}x_0,\Image \xi_{\alpha'}) 
&\ge 
d_X(\xi_{\alpha}(i_0),\Image \xi_{\alpha'})-d_X(g_{i_0}x_0,\xi_{\alpha}(i_0)) \\
&\ge 
d_X(\xi_{\alpha}(r_0),\Image \xi_{\alpha'})-N \\
&> t-N
\end{align*}
Since $d_X(g'_jx_0,\Image \xi_{\alpha'}) \le N$ for any $j\in \N$, 
we obtain that 
$d_X(g_{i_0}x_0,g'_jx_0) > t-2N$ for any $j\in \N$.
Hence for any $j\in \N$, 
\begin{align*}
d_Y(g_{i_0}y_0,g'_jy_0) &\ge \frac{1}{\lambda}d_X(g_{i_0}x_0,g'_jx_0)-C \\
&> \frac{1}{\lambda}(t-2N)-C.
\end{align*}
Here by $(6')$, 
$$ \Image\xi_{\bar{{\alpha'}}} \subset 
\bigcup\{B(g'_jy_0,3(\tilde{M}+1)+\lambda(2N+1)+C)\,|\,j\in\N \}.$$
Let $j_0\in\N$ such that 
$$d_Y(g_{i_0}y_0,g'_{j_0}y_0)=\min\{d_Y(g_{i_0}y_0,g'_jy_0)\,|\,j\in\N \}.$$
Then 
\begin{align*}
d_Y(g_{i_0}y_0,\Image \xi_{\bar{{\alpha'}}}) &\ge 
\min\{d_Y(g_{i_0}y_0,g'_jy_0)\,|\,j\in\N \}-(3(\tilde{M}+1)+\lambda(2N+1)+C) \\
&= d_Y(g_{i_0}y_0,g'_{j_0}y_0)-(3(\tilde{M}+1)+\lambda(2N+1)+C) \\
&> (\frac{1}{\lambda}(t-2N)-C)-(3(\tilde{M}+1)+\lambda(2N+1)+C),
\end{align*}
since 
$d_Y(g_{i_0}y_0,g'_jy_0)>\frac{1}{\lambda}(t-2N)-C$ for any $j\in \N$ 
by the argument above.

Thus for any $t>0$, 
there exists $i_0\in \N$ such that 
$$ d_Y(g_{i_0}y_0,\Image \xi_{\bar{{\alpha'}}}) > 
(\frac{1}{\lambda}(t-2N)-C)-(3(\tilde{M}+1)+\lambda(2N+1)+C).$$
Here by (3), there exists $R_0>0$ such that 
$$ d_Y(g_{i_0}y_0,\xi_{\bar{\alpha}}(R_0)) \le \tilde{M}+1.$$
Then 
\begin{align*}
d_Y(\xi_{\bar{\alpha}}(R_0),\Image \xi_{\bar{{\alpha'}}}) 
&\ge d_Y(g_{i_0}y_0,\Image \xi_{\bar{{\alpha'}}})-d_Y(g_{i_0}y_0,\xi_{\bar{\alpha}}(R_0)) \\
&> (\frac{1}{\lambda}(t-2N)-C)-(3(\tilde{M}+1)+\lambda(2N+1)+C)-(\tilde{M}+1) \\
&= (\frac{1}{\lambda}(t-2N)-C)-(4(\tilde{M}+1)+\lambda(2N+1)+C).
\end{align*}
Since $t>0$ is an arbitrary large number, 
the two geodesic rays $\xi_{\bar{\alpha}}$ and $\xi_{\bar{{\alpha'}}}$ are not asymptotic 
and $\bar{\alpha}\neq\bar{{\alpha'}}$.

Therefore, 
the map $\bar{\phi}:\partial X\rightarrow \partial Y$ is injective.
\end{proof}

From Propositions~\ref{Prop2-3} and \ref{Prop2-6}, 
we obtain that 
the map $\bar{\phi}:\partial X\rightarrow \partial Y$ is bijective.

Then we show the following.

\begin{Proposition}~\label{Prop2-7}
The map $\bar{\phi}:\partial X\rightarrow \partial Y$ is continuous.
\end{Proposition}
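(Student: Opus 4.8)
The plan is to prove continuity of $\bar{\phi}$ at each point $\alpha\in\partial X$ by showing that it is sequentially continuous; since $X\cup\partial X$ and $Y\cup\partial Y$ are metrizable compacta, this suffices. So I would take a sequence $\{\alpha_n\}\subset\partial X$ converging to $\alpha$ in $X\cup\partial X$ and aim to show $\bar{\phi}(\alpha_n)\to\bar{\phi}(\alpha)$ in $Y\cup\partial Y$. Setting $\bar{\alpha}=\bar{\phi}(\alpha)$ and $\bar{\alpha}_n=\bar{\phi}(\alpha_n)$, and writing $\xi_{\alpha},\xi_{\alpha_n}$ for the geodesic rays from $x_0$ and $\xi_{\bar{\alpha}},\xi_{\bar{\alpha}_n}$ for those from $y_0$, I would for each $n$ choose a sequence $\{g^{(n)}_ix_0\}$ with $d_X(g^{(n)}_ix_0,\xi_{\alpha_n}(i))\le N$ as in Lemma~\ref{Lem2-5} (and similarly $\{g_ix_0\}$ for $\alpha$), so that all six conclusions of that lemma apply uniformly — crucially, the constants $\tilde N,\tilde M$ and $3(\tilde M+1)+\lambda(2N+1)+C$ do not depend on $n$.

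The key step is a ``fellow-traveling'' argument on bounded initial segments. Fix $R>0$ and $\epsilon_0>0$; I want to find $n_0$ so that $d_Y(\xi_{\bar{\alpha}_n}(R),\xi_{\bar{\alpha}}(R))<\epsilon_0$ (and $\xi_{\bar{\alpha}_n}(R)$ is genuinely at distance $\ge R$, which is automatic since these are rays) for all $n\ge n_0$. The route: because $\alpha_n\to\alpha$, for large $n$ the rays $\xi_{\alpha_n}$ and $\xi_\alpha$ stay $\epsilon$-close on a long initial segment $[0,r]$; choosing $r$ large compared to $R$, $\lambda$, $C$, $\tilde N$, $\tilde M$, pick an index $i$ with $i\approx r$ so that $g_ix_0$ and $g^{(n)}_ix_0$ are within a controlled distance in $X$ (bounded by $2N+\epsilon$ roughly, using $d_X(g_ix_0,\xi_\alpha(i))\le N$, $d_X(g^{(n)}_ix_0,\xi_{\alpha_n}(i))\le N$, and closeness of the rays at parameter $i$). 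Applying the quasi-isometry inequality, $g_iy_0$ and $g^{(n)}_iy_0$ are within a bounded distance $D$ in $Y$. By Lemma~\ref{Lem2-5}(3) (and its primed analogue), $g_iy_0$ lies within $\tilde M+1$ of $\mathrm{Im}\,\xi_{\bar\alpha}$ and $g^{(n)}_iy_0$ within $\tilde M+1$ of $\mathrm{Im}\,\xi_{\bar\alpha_n}$; moreover, because $d_Y(y_0,g_iy_0)$ and $d_Y(y_0,g^{(n)}_iy_0)$ are both large (as $i$ is large and $\phi$ is a quasi-isometry), the nearest points on $\xi_{\bar\alpha},\xi_{\bar\alpha_n}$ to these are at parameter $\ge R$. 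Convexity of the $Y$-metric (CAT(0)) then forces $d_Y(\xi_{\bar\alpha_n}(R),\xi_{\bar\alpha}(R))\le$ (something controlled by $D+2(\tilde M+1)$) times $R$ over that large parameter — i.e.\ the initial segments fellow-travel, and by taking $r$, hence $i$, hence the common parameter, large enough the value at parameter $R$ can be made $<\epsilon_0$.

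Thus given $R$ and $\epsilon_0$ there is $n_0$ with $\xi_{\bar\alpha_n}\in U_{Y\cup\partial Y}(\bar\alpha;R,\epsilon_0)$ for $n\ge n_0$, and since $\{U_{Y\cup\partial Y}(\bar\alpha;R,\epsilon_0)\mid R>0\}$ is a neighborhood basis at $\bar\alpha$, we get $\bar\alpha_n\to\bar\alpha$, proving sequential continuity and hence continuity. (Combined with Propositions~\ref{Prop2-3} and \ref{Prop2-6}, $\bar\phi$ is a continuous bijection between compact Hausdorff spaces, hence a homeomorphism; $G$-equivariance is immediate from $\bar\phi(g\alpha)=g\bar\phi(\alpha)$, which follows by applying $\bar\phi$ to the image of the sequence $\{gg_ix_0\}$ and using that $g$ acts by homeomorphisms on both $X\cup\partial X$ and $Y\cup\partial Y$.)

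The main obstacle I anticipate is the bookkeeping in the fellow-traveling step: making the ``large $r$ forces small displacement at parameter $R$'' implication quantitatively clean. One must carefully choose, in order, $\epsilon$ (closeness of the $X$-rays), then the index $i$, then track the resulting bound $D$ in $Y$, then invoke CAT(0) convexity to transfer a bound known only at a far parameter ($\ge R$, in fact $\approx r/\lambda$) back to a bound at parameter $R$ — the convexity gives the value at $R$ is at most $\frac{R}{\text{(far parameter)}}$ times the far value, so the far parameter must be taken large relative to $R\cdot D$. Keeping all constants manifestly independent of $n$ throughout is the delicate point; everything else is routine use of the triangle inequality, the quasi-isometry inequality, and Lemma~\ref{Lem2-5}.
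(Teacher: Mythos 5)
Your proposal is correct, but it takes a genuinely different route from the paper. The paper's proof is a soft, purely topological diagonal argument: for each $\alpha_i$ it picks an orbit sequence $\{a_{i,j}x_0\}_j\to\alpha_i$, observes that a diagonal subfamily lies in $U_{X\cup\partial X}(\alpha;r,2)$ and hence converges to $\alpha$, and then plays the neighborhood bases of $\bar{\alpha}$ and $\bar{\alpha}_i$ in $Y\cup\partial Y$ against each other to conclude $\bar{\alpha}_i\in U_{Y\cup\partial Y}(\bar{\alpha};r,2)$; the only geometric input is the already-established fact (Lemma~\ref{Lem1} plus Proposition~\ref{Prop2-1}) that orbit sequences converging to a boundary point have images converging to its $\bar{\phi}$-image. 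You instead give a quantitative fellow-traveling argument: approximate $\xi_{\alpha}$ and $\xi_{\alpha_n}$ by orbit points, transfer a $(2N+\epsilon)$-closeness at a far parameter through the quasi-isometry, use Lemma~\ref{Lem2-5}(3) to land near $\Image\xi_{\bar{\alpha}}$ and $\Image\xi_{\bar{\alpha}_n}$, and pull the resulting bound $D'=\lambda(2N+\epsilon)+C+2(\tilde{M}+1)$ back to parameter $R$ by CAT(0) convexity. Your scheme does close: the two nearest-point parameters $t,t_n$ differ by at most $D'$, so the convexity estimate gives $d_Y(\xi_{\bar{\alpha}}(R),\xi_{\bar{\alpha}_n}(R))\le 2RD'/t$, which is $<\epsilon_0$ once $t$ (hence $i$, hence $r$) is chosen large before $n_0$ is chosen — and all constants are independent of $n$, as you note. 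What your approach buys is an explicit modulus of continuity in terms of $N,\tilde{M},\lambda,C$ and a reuse of the machinery already built for injectivity; what it costs is exactly the reparametrization bookkeeping you flag, which the paper's softer argument avoids entirely.
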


\begin{proof}
Let $\alpha\in \partial X$ and 
let $\{\alpha_i\,|\,i\in \N\}\subset \partial X$ be a sequence 
which converges to $\alpha$ in $\partial X$.

It is sufficient to show that the sequence $\{\bar{\alpha}_i\,|\,i\in \N\}$ 
converges to $\bar{\alpha}$ in $\partial Y$, where 
$\bar{\alpha}_i=\bar{\phi}(\alpha_i)$ and $\bar{\alpha}=\bar{\phi}(\alpha)$.

For each $i\in \N$, 
there exists a sequence $\{a_{i,j}x_0\,|\,j\in \N\}$ which converges to $\alpha_i$ as $j\rightarrow \infty$.

Since the sequence $\{\alpha_i\,|\,i\in \N\}\subset \partial X$ converges to $\alpha$ in $\partial X$, 
for any $r>0$, 
there exists $i_0\in \N$ such that 
$$\alpha_i \in U_{X\cup\partial X}(\alpha;r,1)$$ 
for any $i \ge i_0$.
Since the sequence $\{a_{i,j}x_0\,|\,j\in \N\}$ converges to $\alpha_i$ as $j\rightarrow \infty$ in $X\cup\partial X$, 
for any $i\ge i_0$, 
there exists $j_i\in \N$ such that 
$$ a_{i,j}x_0 \in U_{X\cup\partial X}(\alpha_i;r,1) $$ 
for any $j\ge j_i$.

Then 
$$ a_{i,j}x_0 \in U_{X\cup\partial X}(\alpha;r,2) $$ 
for any $i\ge i_0$ and $j\ge j_i$.

Now the sequence $\{a_{i,j}x_0\,|\, j\ge j_i, i\ge i_0\}$ converges to $\alpha$ 
as $i\rightarrow \infty$ in $X\cup\partial X$.
Hence the sequence $\{a_{i,j}y_0\,|\, j\ge j_i, i\ge i_0\}$ converges to $\bar{\phi}(\alpha)=\bar{\alpha}$ 
as $i\rightarrow \infty$ in $Y\cup\partial Y$.

Here we note that 
the sequence $\{a_{i,j}y_0\,|\,j\in \N\}$ converges to $\bar{\phi}(\alpha_i)=\bar{\alpha}_i$ 
as $j\rightarrow \infty$ in $Y\cup\partial Y$.

Then for any $r>0$, 
there exists an enough large number $i\ge i_0$ such that 
$$ a_{i,j}y_0 \in U_{Y\cup \partial Y}(\bar{\alpha};r,1) $$
for any $j \ge j_i$.
Also there exists an enough large number $j \ge j_i$ such that 
$$ a_{i,j}y_0 \in U_{Y\cup \partial Y}(\bar{\alpha}_i;r,1). $$
Then we obtain that 
$$ \bar{\alpha}_i \in U_{Y\cup \partial Y}(\bar{\alpha};r,2). $$

Thus the sequence $\{\bar{\alpha}_i\,|\,i\in \N\}$ 
converges to $\bar{\alpha}$ in $\partial Y$ 
and 
the map $\bar{\phi}:\partial X\rightarrow \partial Y$ is continuous.
\end{proof}

Therefore we obtain the following.

\begin{Theorem}
The map $\bar{\phi}:\partial X\rightarrow \partial Y$ is a $G$-equivariant homeomorphism.
\end{Theorem}

\begin{proof}
By the argument above, 
the map $\bar{\phi}:\partial X\rightarrow \partial Y$ is well-defined, bijective and continuous.

From the definition and the well-definedness of $\bar{\phi}$, 
we obtain that 
the map $\bar{\phi}:\partial X\rightarrow \partial Y$ is $G$-equivariant.
Indeed for any $\alpha\in\partial X$ and $g\in G$, 
if $\{g_ix_0\}\subset Gx_0\subset X$ is a sequence which converges to $\alpha$ in $X\cup\partial X$, 
then $\bar{\phi}(\alpha)$ is the point of $\partial Y$ 
to which the sequence $\{g_iy_0\}\subset Gy_0\subset Y$ converges in $Y\cup\partial Y$.
Then $\{gg_ix_0\}\subset Gx_0\subset X$ is the sequence which converges to $g\alpha$ in $X\cup\partial X$ 
and $\bar{\phi}(g\alpha)$ is the point of $\partial Y$ 
to which the sequence $\{gg_iy_0\}\subset Gy_0\subset Y$ converges in $Y\cup\partial Y$.
Here we note that 
the sequence $\{gg_iy_0\}\subset Gy_0\subset Y$ converges to $g\bar{\phi}(\alpha)$ in $Y\cup\partial Y$ 
by the definition of the action of $G$ on $\partial Y$.
Hence $\bar{\phi}(g\alpha)=g\bar{\phi}(\alpha)$ 
for any $\alpha\in\partial X$ and $g\in G$ and 
the map $\bar{\phi}:\partial X\rightarrow \partial Y$ is $G$-equivariant.

Also, 
the map $\bar{\phi}:\partial X\rightarrow \partial Y$ is closed, 
since $\partial X$ and $\partial Y$ are compact and metrizable.

Therefore, we obtain that 
the map $\bar{\phi}:\partial X\rightarrow \partial Y$ is 
a $G$-equivariant homeomorphism.
\end{proof}

\section{Proof of Theorem~\ref{Thm2}}

In this section, we prove Theorem~\ref{Thm2}.

We suppose that a group $G$ acts geometrically on two CAT(0) spaces $(X,d_X)$ and $(Y,d_Y)$.
Let $x_0\in X$ and $y_0\in Y$.

We first show that if the condition~$(**)$ holds, 
then there exists a $G$-equivariant homeomorphism of 
the boundaries $\partial X$ and $\partial Y$ 
as a continuous extension of the quasi-isometry 
$\phi:Gx_0\rightarrow Gy_0$ defined by $\phi(gx_0)=gy_0$.

Now we suppose that the condition $(**)$ holds; that is, 
\begin{enumerate}
\item[$(**)$] 
For any sequence $\{g_i\,|\,i\in \N\} \subset G$, 
the sequence $\{g_i x_0\,|\,i\in \N\}$ is a Cauchy sequence in $X\cup \partial X$ if and only if 
the sequence $\{g_i y_0\,|\,i\in \N\}$ is a Cauchy sequence in $Y\cup \partial Y$.
\end{enumerate}

We define a map 
$\bar{\phi}:\partial X\rightarrow \partial Y$ 
as a continuous extension of the quasi-isometry 
$\phi:Gx_0\rightarrow Gy_0$ defined by $\phi(gx_0)=gy_0$ by the same argument in Section~3:
For each $\alpha\in \partial X$, 
there exists a sequence $\{g_ix_0\}\subset Gx_0\subset X$ 
which converges to $\alpha$ in $X\cup\partial X$.
Then the sequence $\{g_ix_0\}\subset X$ is a Cauchy sequence in $X\cup \partial X$.
By the condition~$(**)$, 
the sequence $\{g_iy_0\}\subset Y$ is also a Cauchy sequence in $Y\cup\partial Y$.
Hence the sequence $\{g_iy_0\}\subset Y$ converges to 
some point $\bar{\alpha}\in\partial Y$ in $Y\cup\partial Y$.
Then we define $\bar{\phi}(\alpha)=\bar{\alpha}$.

We obtain the following by the same proof as the one of Proposition~\ref{Prop2-2}.

\begin{Proposition}\label{Prop4-2}
The map $\bar{\phi}:\partial X\rightarrow \partial Y$ is well-defined.
\end{Proposition}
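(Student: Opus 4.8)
The plan is to follow verbatim the argument of Proposition~\ref{Prop2-2}, substituting the hypothesis $(**)$ for the role played there by Proposition~\ref{Prop2-1}. Fix $\alpha\in\partial X$, and let $\{g_ix_0\},\{h_ix_0\}\subset Gx_0\subset X$ be two sequences both converging to $\alpha$ in $X\cup\partial X$; these are the two a priori choices that the definition of $\bar\phi(\alpha)$ depends on, so I must show they produce the same point of $\partial Y$. By Lemma~\ref{Lem1} each of $\{g_ix_0\}$ and $\{h_ix_0\}$ is a Cauchy sequence in $X\cup\partial X$, hence by the condition $(**)$ each of $\{g_iy_0\}$ and $\{h_iy_0\}$ is a Cauchy sequence in $Y\cup\partial Y$, and so by Lemma~\ref{Lem1} again they converge to points $\bar\alpha,\bar\beta\in\partial Y$ respectively. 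The goal is then $\bar\alpha=\bar\beta$.

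The key step is the interleaving trick. I would form a single sequence $\{\tilde g_jx_0\,|\,j\in\N\}\subset Gx_0\subset X$ whose underlying set is $\{g_ix_0\,|\,i\in\N\}\cup\{h_ix_0\,|\,i\in\N\}$, arranged so that $\{\tilde g_jx_0\}$ still converges to $\alpha$ in $X\cup\partial X$; this is possible because both "halves" converge to $\alpha$, so one can enumerate the union so that for every $r$ all sufficiently late terms lie in $U_{X\cup\partial X}(\alpha;r,1)$ (alternating the two sequences works). Then $\{\tilde g_jx_0\}$ is a Cauchy sequence in $X\cup\partial X$ by Lemma~\ref{Lem1}, so $\{\tilde g_jy_0\}$ is a Cauchy sequence in $Y\cup\partial Y$ by $(**)$, and hence converges to some $\bar\gamma\in\partial Y$ by Lemma~\ref{Lem1}. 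Since $\{g_iy_0\}$ and $\{h_iy_0\}$ are subsequences of the convergent sequence $\{\tilde g_jy_0\}$, both limits agree with $\bar\gamma$, giving $\bar\alpha=\bar\gamma=\bar\beta$. Therefore $\bar\phi(\alpha)$ does not depend on the chosen approximating sequence, i.e. $\bar\phi$ is well-defined.

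I do not expect a genuine obstacle here: the content is entirely a repackaging of Proposition~\ref{Prop2-2}, with $(**)$ replacing the implication proved in Proposition~\ref{Prop2-1} (note that $(**)$ is the two-sided strengthening of exactly that implication, so it applies in particular in the direction $X\cup\partial X\Rightarrow Y\cup\partial Y$ needed here). The only point requiring a sentence of care is the construction of the interleaved sequence and the verification that it still converges to $\alpha$; everything else is an immediate invocation of Lemma~\ref{Lem1} and the subsequence principle. Hence the proof can simply read: "By the same proof as that of Proposition~\ref{Prop2-2}, using the condition $(**)$ in place of Proposition~\ref{Prop2-1}."
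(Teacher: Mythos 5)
Your proof is correct and is exactly the paper's argument: the paper proves Proposition~\ref{Prop4-2} ``by the same proof as the one of Proposition~\ref{Prop2-2},'' i.e.\ the interleaving trick with the forward direction of condition~$(**)$ playing the role of Proposition~\ref{Prop2-1}. Nothing further is needed.
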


Also we can define the inverse ${\bar{\phi}}^{-1}:\partial Y \rightarrow \partial X$ 
as a continuous extension of the quasi-isometry 
$\phi^{-1}:Gy_0\rightarrow Gx_0$ defined by $\phi^{-1}(gy_0)=gx_0$ by the condition~$(**)$.

Hence, we obtain the following.

\begin{Proposition}\label{Prop4-3}
The map $\bar{\phi}:\partial X\rightarrow \partial Y$ is bijective.
\end{Proposition}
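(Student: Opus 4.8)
The plan is to use the map $\bar\phi^{-1}:\partial Y\to\partial X$ introduced just above — built from the quasi-isometry $\phi^{-1}:Gy_0\to Gx_0$, $gy_0\mapsto gx_0$, exactly as $\bar\phi$ was built from $\phi$, which is legitimate because the condition $(**)$ is symmetric in the pairs $(X,x_0)$ and $(Y,y_0)$ — and to show that $\bar\phi^{-1}$ is a two-sided inverse of $\bar\phi$. Recall its construction: for $\bar\alpha\in\partial Y$ one picks a sequence $\{g_iy_0\}\subset Gy_0$ converging to $\bar\alpha$ in $Y\cup\partial Y$; by Lemma~\ref{Lem1} it is a Cauchy sequence in $Y\cup\partial Y$, by $(**)$ the sequence $\{g_ix_0\}$ is then a Cauchy sequence in $X\cup\partial X$, hence by Lemma~\ref{Lem1} converges to a point of $\partial X$, and that point is $\bar\phi^{-1}(\bar\alpha)$; the analogue of Proposition~\ref{Prop4-2} (with $X$ and $Y$ interchanged) shows this does not depend on the chosen sequence.

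The key computation is that $\bar\phi^{-1}\circ\bar\phi=\mathrm{id}_{\partial X}$. I would argue as follows. Let $\alpha\in\partial X$ and choose a sequence $\{g_ix_0\}\subset Gx_0$ converging to $\alpha$ in $X\cup\partial X$. By Lemma~\ref{Lem1} this sequence is Cauchy in $X\cup\partial X$, so by $(**)$ the sequence $\{g_iy_0\}$ is Cauchy in $Y\cup\partial Y$, and by Lemma~\ref{Lem1} it converges to a point of $\partial Y$, which by the definition of $\bar\phi$ is exactly $\bar\phi(\alpha)$. Thus $\{g_iy_0\}$ is a sequence in $Gy_0$ converging to $\bar\phi(\alpha)$ in $Y\cup\partial Y$, so by the definition of $\bar\phi^{-1}$ the point $\bar\phi^{-1}(\bar\phi(\alpha))$ is the limit in $X\cup\partial X$ of $\{g_ix_0\}$, namely $\alpha$. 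Hence $\bar\phi^{-1}\circ\bar\phi=\mathrm{id}_{\partial X}$, and interchanging the roles of $X$ and $Y$ gives $\bar\phi\circ\bar\phi^{-1}=\mathrm{id}_{\partial Y}$. Therefore $\bar\phi$ is a bijection.

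I do not anticipate a real obstacle: the argument is a formal consequence of the symmetry of $(**)$, Lemma~\ref{Lem1}, and the way $\bar\phi$ and $\bar\phi^{-1}$ are defined through convergent sequences in $Gx_0$ and $Gy_0$. The only point requiring a little care is that $\bar\phi^{-1}(\bar\phi(\alpha))$ must be computed from \emph{some} sequence representing $\bar\phi(\alpha)$, and the well-definedness (Proposition~\ref{Prop4-2} with $X$ and $Y$ swapped) guarantees that using the particular sequence $\{g_iy_0\}$ gives the correct value; once this is noted the proof is complete.
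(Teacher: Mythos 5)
Your argument is correct and follows the same route as the paper: the paper simply notes that $(**)$ is symmetric, so $\phi^{-1}:Gy_0\to Gx_0$ extends to a map $\bar{\phi}^{-1}:\partial Y\to\partial X$ by the identical construction, and concludes bijectivity. You have merely spelled out the verification that the two extensions are mutually inverse (using the same orbit sequence for both and invoking well-definedness), which the paper leaves implicit.
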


Also, we obtain the following by the same proof as the one of Proposition~\ref{Prop2-7}.

\begin{Proposition}~\label{Prop4-4}
The map $\bar{\phi}:\partial X\rightarrow \partial Y$ is continuous.
\end{Proposition}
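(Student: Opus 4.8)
The plan is to reuse the proof of Proposition~\ref{Prop2-7} essentially verbatim. The point is that that argument never invokes the condition~$(*)$ directly; it uses only two facts about $\bar{\phi}$: that it is well-defined (here Proposition~\ref{Prop4-2}), and that by construction $\bar{\phi}$ sends the limit of any sequence $\{g_ix_0\}\subset Gx_0$ converging in $X\cup\partial X$ to the limit of the corresponding sequence $\{g_iy_0\}\subset Gy_0$ in $Y\cup\partial Y$. Both of these hold under the hypothesis~$(**)$ (via Lemma~\ref{Lem1}, which translates convergence into the Cauchy condition appearing in $(**)$), so the same proof goes through with no change.

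Concretely, I would argue as follows. Fix $\alpha\in\partial X$ and a sequence $\{\alpha_i\}\subset\partial X$ with $\alpha_i\to\alpha$; it suffices to show $\bar{\alpha}_i\to\bar{\alpha}$ in $\partial Y$, where $\bar{\alpha}_i=\bar{\phi}(\alpha_i)$ and $\bar{\alpha}=\bar{\phi}(\alpha)$. For each $i$ choose a sequence $\{a_{i,j}x_0\}_{j}\subset Gx_0$ converging to $\alpha_i$ as $j\to\infty$. Given $r>0$, use $\alpha_i\to\alpha$ to find $i_0$ with $\alpha_i\in U_{X\cup\partial X}(\alpha;r,1)$ for $i\ge i_0$, and then for each such $i$ find $j_i$ with $a_{i,j}x_0\in U_{X\cup\partial X}(\alpha_i;r,1)$ for $j\ge j_i$; combining the two $\epsilon$-estimates by the triangle inequality gives $a_{i,j}x_0\in U_{X\cup\partial X}(\alpha;r,2)$ for all $i\ge i_0$, $j\ge j_i$. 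Hence the reindexed sequence $\{a_{i,j}x_0:\,j\ge j_i,\ i\ge i_0\}$ converges to $\alpha$ as $i\to\infty$, so by the construction and well-definedness of $\bar{\phi}$ the sequence $\{a_{i,j}y_0\}$ converges to $\bar{\alpha}$ in $Y\cup\partial Y$; at the same time $\{a_{i,j}y_0\}_{j}$ converges to $\bar{\alpha}_i$ as $j\to\infty$ for each fixed $i$. Picking $i\ge i_0$ large and then $j\ge j_i$ large so that $a_{i,j}y_0\in U_{Y\cup\partial Y}(\bar{\alpha};r,1)$ and $a_{i,j}y_0\in U_{Y\cup\partial Y}(\bar{\alpha}_i;r,1)$, the triangle inequality yields $\bar{\alpha}_i\in U_{Y\cup\partial Y}(\bar{\alpha};r,2)$. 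Since $r>0$ was arbitrary and $\{U_{Y\cup\partial Y}(\bar{\alpha};r,2):r>0\}$ is a neighborhood basis at $\bar{\alpha}$, we get $\bar{\alpha}_i\to\bar{\alpha}$.

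The only delicate point is the passage through the double-indexed family $\{a_{i,j}y_0\}$: one must make sure that ``converges to $\alpha$ as $i\to\infty$ along $j\ge j_i$'' is enough for the construction of $\bar{\phi}$ to force the $Y$-side limit to equal $\bar{\alpha}$, and this is exactly where the well-definedness of $\bar{\phi}$ (Proposition~\ref{Prop4-2}) does the work. Because $\partial X$ and $\partial Y$ are metrizable, everything can be phrased with sequences and no further subtlety arises; in particular no appeal to the condition~$(*)$ or to the geometric lemmas of Section~3 (Lemmas~\ref{Lem2-4} and \ref{Lem2-5}) is needed. Once continuity is established, combining it with the bijectivity of Proposition~\ref{Prop4-3} and the compactness and metrizability of the boundaries shows $\bar{\phi}$ is a homeomorphism, and its $G$-equivariance is checked as in Section~3; this completes the ``only if'' direction of Theorem~\ref{Thm2}, the converse direction being the implication already noted (a continuous extension of $\phi$ and of $\phi^{-1}$ forces $(**)$ via Lemma~\ref{Lem1}).
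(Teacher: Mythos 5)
Your proposal is correct and is essentially identical to the paper's own treatment: the paper proves Proposition~\ref{Prop4-4} simply by citing the proof of Proposition~\ref{Prop2-7}, which, as you observe, relies only on the well-definedness of $\bar{\phi}$ and on its construction via convergent orbit sequences, both of which are available under condition~$(**)$.
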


Thus we obtain the following theorem.

\begin{Theorem}
The map $\bar{\phi}:\partial X\rightarrow \partial Y$ is a $G$-equivariant homeomorphism.
\end{Theorem}

Conversely, 
we suppose that the condition~$(**)$ does not hold.

Then there exists a sequence $\{g_i\,|\,i\in \N\}\subset G$ 
such that either 
\begin{enumerate}
\item[(1)] $\{g_ix_0\,|\,i\in \N\}$ is a Cauchy sequence in $X\cup\partial X$ and 
$\{g_iy_0\,|\,i\in \N\}$ is {\it not} a Cauchy sequence in $Y\cup\partial Y$, or 
\item[(2)] $\{g_ix_0\,|\,i\in \N\}$ is {\it not} a Cauchy sequence in $X\cup\partial X$ and 
$\{g_iy_0\,|\,i\in \N\}$ is a Cauchy sequence in $Y\cup\partial Y$.
\end{enumerate}

Now we suppose that 
(1) $\{g_ix_0\,|\,i\in \N\}$ is a Cauchy sequence in $X\cup\partial X$ and 
$\{g_iy_0\,|\,i\in \N\}$ is {\it not} a Cauchy sequence in $Y\cup\partial Y$.

Then, 
the sequence $\{g_ix_0\,|\,i\in \N\}$ is Cauchy and converges to some point $\alpha \in \partial X$ 
in $X\cup\partial X$.
On the other hand, 
the sequence $\{g_iy_0\,|\,i\in \N\}$ is not Cauchy and 
contains subsequences $\{g_{i_j}y_0\,|\,j\in \N\}$ and $\{g_{k_j}y_0\,|\,j\in \N\}$ 
which converge to some points $\bar{\alpha}$ and $\bar{\beta}$ in $\partial Y$ respectively, 
with $\bar{\alpha} \neq \bar{\beta}$.

This means that the map $\phi:Gx_0\rightarrow Gy_0 \ (gx_0 \mapsto gy_0)$ does not continuously 
extend to any map $\bar{\phi}:\partial X \rightarrow \partial Y$.

By the same argument, 
we obtain that if 
(2) $\{g_ix_0\,|\,i\in \N\}$ is {\it not} a Cauchy sequence in $X\cup\partial X$ and 
$\{g_iy_0\,|\,i\in \N\}$ is a Cauchy sequence in $Y\cup\partial Y$, 
then the map $\phi^{-1}:Gy_0\rightarrow Gx_0 \ (gy_0 \mapsto gx_0)$ does not continuously 
extend to any map $\bar{\phi}^{-1}:\partial Y \rightarrow \partial X$.

Therefore, 
there does not exist a $G$-equivariant homeomorphism of 
the boundaries $\partial X$ and $\partial Y$ 
as a continuous extension of the quasi-isometry 
$\phi:Gx_0\rightarrow Gy_0$ defined by $\phi(gx_0)=gy_0$.

\section{Examples of equivariant rigid CAT(0) groups}

As an application of Theorem~\ref{Thm1} and the condition $(*)$, 
we introduce some examples.

\begin{Example}\label{Ex1}
Let $G=\Z\times \Z$.
The group $G$ acts geometrically on the flat plane $X=\R^2$ by 
$(a,b)\cdot (x,y)=(x+a,y+b)$ 
for any $(a,b)\in\Z\times \Z=G$ and $(x,y)\in \R^2=X$.

Suppose that the group $G$ acts geometrically on a CAT(0) space $Y$.
Then by the Flat Torus Theorem \cite[Theorem~II.7.1]{BH}, 
there exist a quasi-convex subset $Y'$ of $Y$ and a point $y_0\in Y$ such that 
$Y'$ is isometric to $\R^2$ and $Y'$ is the convex hull $C(Gy_0)$ of the orbit $Gy_0$ in $Y$, 
where the orbit $Gy_0$ is a lattice in $Y'\cong \R^2$.

Then there exists a linear transformation 
$\phi:X\rightarrow Y'$.

In this case, we see that 
the condition~$(*)$ holds for the actions of the group $G$ on $X$ and $Y$.

In fact, the induced map $\bar{\phi}:\partial X \rightarrow \partial Y$ 
of the boundaries $\partial X$ and $\partial Y$ that are homeomorphic to a circle $\S^1$ 
is an equivariant homeomorphism.
\end{Example}

\begin{Example}
By the same argument as above, 
we obtain that any geometric actions of a group $G=\Z^n$ ($n\in \N$) on any CAT(0) spaces 
satisfy the condition~$(*)$.
Hence $G=\Z^n$ is an equivariant rigid CAT(0) group.
\end{Example}

\begin{Example}
Let $G=(\Z\times \Z)*\Z_2$ which is the free product of $\Z\times \Z$ and $\Z_2$.
Let $G_1=\Z\times \Z$ and $A=\Z_2$; that is, $G=G_1*A$.

We construct a CAT(0) cubical cell complex $\Sigma$ as follows:
Let $X_1=\R^2$ on which $G_1=\Z\times \Z$ acts geometrically by 
$(a,b)\cdot (x,y)=(x+a,y+b)$ 
for each $(a,b)\in G_1$ and $(x,y)\in X_1$.
Here we consider that $X_1=\R^2$ is the cubical cell complex 
whose $1$-skeleton is the Cayley graph of $G_1=\Z\times \Z$.
Also let $X_2=[0,1]$ on which $G_2=\Z_2$ acts by 
$\bar{0} \cdot x=x$ and $\bar{1} \cdot x=1-x$ 
for each $\bar{0},\bar{1}\in \Z_2$ and $x\in [0,1]=X_2$; 
that is, $A=\Z_2$ is a reflection group of $X_2=[0,1]$.
Here we may consider that $X_2=[0,1]$ is the cubical complex 
whose $1$-skeleton is the Cayley graph of $A=\Z_2$.
Then we define the 2-dimensional cubical cell complex $\Sigma$ as 
$$ \Sigma = \bigcup\{gX_1\,|\, g\in G\} \cup \bigcup\{gX_2\,|\, g\in G\}, $$
where we identify the two points $g\cdot (0,0)\in gX_1$ and $g\cdot 0\in g X_2$ 
for any $g\in G$ and 
the $1$-skeleton of $\Sigma$ is the Cayley graph of $G=(\Z\times \Z)*\Z_2$.
This construction is similar to one of 
the Davis complex of the right-angled Coxeter group $W=((\Z_2*\Z_2)\times (\Z_2*\Z_2))*\Z_2$.

Now we show that 
if the group $G=(\Z\times \Z)*\Z_2$ acts geometrically on a CAT(0) space $Y$, 
then the actions of $G$ on $\Sigma$ and $Y$ satisfy the condition~$(*)$.

Suppose that the group $G=G_1*A=(\Z\times \Z)*\Z_2$ acts geometrically on a CAT(0) space $Y$.
Then there exists $y_0\in Y$ such that the convex-hull $C(G_1y_0)$ is isometric to $\R^2$ 
by \cite[Theorem~II.7.1]{BH}. We put $Y_1=C(G_1y_0)$.

Let identify $A=\{1,a\}$ ($a^2=1$).
Then we note that $\partial Y_1 \cap \partial aY_1 = \emptyset$.
Hence 
there exists $M>0$ such that 
\begin{align*}
&[y,y'] \cap B(y_0,M) \neq \emptyset \ \text{and} \\
&[y,y'] \cap B(ay_0,M) \neq \emptyset
\end{align*}
for any $y\in Y$ and $y'\in aY$.

Here for any $g\in G$, 
we can write $g=g_1a_1\cdots g_na_n$ for some $g_i\in G_1$ and $a_i\in A$ 
(where it may $g_1=1$ or $a_n=1$).
Then 
$$ [x_0,gx_0]=[x_0,g_1x_0]\cup[g_1x_0,g_1a_1x_0]\cup\dots \cup [g_1a_1\cdots g_nx_0,gx_0], $$
in $\Sigma$.
Also, 
\begin{align*}
&[y_0,gy_0] \cap B(g_1a_1\cdots g_{i-1}a_{i-1} g_i y_0,M) \neq \emptyset \ \text{and} \\
&[y_0,gy_0] \cap B(g_1a_1\cdots g_ia_i y_0,M) \neq \emptyset
\end{align*}
for any $i=1,\dots,n$ in $Y$.

Thus, we obtain that 
the geometric action of $G$ on $\Sigma$ and 
any geometric action of $G$ on any CAT(0) space $Y$ satisfy the condition~$(*)$.

Therefore, the group $G=(\Z\times \Z)*\Z_2$ is an equivariant rigid CAT(0) group.
\end{Example}

\begin{Example}
By the same argument as above, 
we obtain that groups of the form $G=\Z^n*\Z_2$ ($n\in \N$) 
are equivariant rigid CAT(0) groups.
\end{Example}

\begin{Example}
Let $G=\Z^n*A$ which is the free product of $\Z^n$ and a finite group $A$ and 
let $G_1=\Z^n$; that is, $G=G_1*A$.

We construct a CAT(0) cubical cell complex $\Sigma$ as follows:

Let $X_1=\R^n$ on which $G_1=\Z^n$ acts geometrically by 
$(a_1,\dots,a_n)\cdot (x_1,\dots,x_n)=(x_1+a_1,\dots,x_n+a_n)$ 
for any $(a_1,\dots,a_n)\in G_1$ and $(x_1,\dots,x_n)\in X_1$.
Here we consider that $X_1=\R^n$ is the cubical cell complex 
whose $1$-skeleton is the Cayley graph of $G_1=\Z^n$.
Then we consider the set $\{gX_1\,|\,g\in G\}$, 
where each $gX_1$ is a copy of $X_1$ 
and $gX_1=hX_1$ if and only if $g^{-1}h\in G_1$.
Let $x_0:=0$ in $X_1=\R^n$.
Then $G_1x_0$ is a lattice of $X_1=\R^n$.

Also let $X_2$ be the cone $x*Az_0$ of $Az_0=\{az_0\,|\,a\in A\}$ where 
the length of $[x,az_0]$ is $1$ and $[x,az_0]$ is isometric to $[0,1]$.
Here $A$ acts naturally by isometries on $X_2$ 
by $a\cdot x=x$ and $a\cdot bz_0=abz_0$ for any $a\in A$ and $bz_0\in Az_0\subset X_2$.
We may consider that $X_2$ is a $1$-dimensional cubical complex.
Then we consider the set $\{gX_2\,|\,g\in G\}$, 
where each $gX_2$ is a copy of $X_2$ 
and $gX_2=hX_2$ if and only if $g^{-1}h\in A$.

Here for each $g\in G$, 
we glue $gX_1$ and $gX_2$ by the one-point union 
$$gX_1 \vee_{gx_0=gz_0} gX_2.$$
Also we define the $n$-dimensional cubical cell complex $\Sigma$ as 
$$ \Sigma = \bigcup\{gX_1\,|\, g\in G\} \cup \bigcup\{gX_2\,|\, g\in G\}, $$
where we identify $gx_0=gz_0$ for any $g\in G$.
Then $\Sigma$ is contractible, since $G=G_1*A$ is the free-product of $G_1$ and $A$.
Moreover, $\Sigma$ is a CAT(0) space on which $G=G_1*A$ naturally acts geometrically.

Now we show that 
if the group $G=G_1*A$ acts geometrically on a CAT(0) space $Y$, 
then the actions of $G$ on $\Sigma$ and $Y$ satisfy the condition~$(*)$.

Suppose that the group $G=G_1*A$ acts geometrically on a CAT(0) space $Y$.
Then there exists $y_0\in Y$ such that the convex-hull $C(G_1y_0)$ is isometric to $\R^n$ 
by \cite[Theorem~II.7.1]{BH}.

Let $a\in A-\{1\}$.
Then we note that $\partial Y_1 \cap \partial aY_1 = \emptyset$.
Hence, by the structure of $G$, 
there exists $M>0$ such that 
\begin{align*}
&[y,y'] \cap B(y_0,M) \neq \emptyset \ \text{and} \\
&[y,y'] \cap B(ay_0,M) \neq \emptyset
\end{align*}
for any $y\in Y$ and $y'\in aY$.

For any $g\in G$, 
we can write $g=g_1a_1\cdots g_na_n$ for some $g_i\in G_1$ and $a_i\in A$.
Then 
$$ [x_0,gx_0]=[x_0,g_1x_0]\cup[g_1x_0,g_1a_1x_0]\cup\dots \cup [g_1a_1\cdots g_nx_0,gx_0], $$
in $\Sigma$.
Also, 
\begin{align*}
&[y_0,gy_0] \cap B(g_1a_1\cdots g_{i-1}a_{i-1} g_i y_0,M) \neq \emptyset \ \text{and} \\
&[y_0,gy_0] \cap B(g_1a_1\cdots g_ia_i y_0,M) \neq \emptyset
\end{align*}
for any $i=1,\dots,n$ in $Y$.

Hence, we obtain that 
the geometric action of $G$ on $\Sigma$ and 
any geometric action of $G$ on any CAT(0) space $Y$ satisfy the condition~$(*)$.

Therefore, the group $G=\Z^n*A$ is an equivariant rigid CAT(0) group.
\end{Example}

\begin{Example}
Groups of the form $G=\Z^{n_1}*\Z^{n_2}$ ($n_1,n_2\in \N$) 
are equivariant rigid CAT(0) groups.

First, for $G=\Z^{n_1}*\Z^{n_2}$ ($n_1,n_2\in \N$) 
where we put $G_1=\Z^{n_1}$ and $G_2=\Z^{n_2}$; that is $G=G_1*G_2$, 
we construct a CAT(0) cubical cell complex $\Sigma$ as follows:

For each $i=1,2$, 
let $X_i=\R^{n_i}$ on which $G_i=\Z^{n_i}$ acts geometrically by 
$(a_1,\dots,a_{n_i})\cdot (x_1,\dots,x_{n_i})=(x_1+a_1,\dots,x_{n_i}+a_{n_i})$ 
for any $(a_1,\dots,a_{n_i})\in G_i$ and $(x_1,\dots,x_{n_i})\in X_i$.
Here we consider that $X_i=\R^{n_i}$ is the cubical cell complex 
whose $1$-skeleton is the Cayley graph of $G_i=\Z^{n_i}$.
Then we consider the set $\{gX_i\,|\,g\in G\}$, 
where each $gX_i$ is a copy of $X_i$ 
and $gX_i=hX_i$ if and only if $g^{-1}h\in G_i$ for each $i=1,2$.
Let $x_i:=0$ in $X_i=\R^{n_i}$.
Then $G_ix_i$ is a lattice of $X_i=\R^{n_i}$.

Here for each $g\in G$, 
we glue $gX_1$ and $gX_2$ by the one-point union 
$$gX_1 \vee_{gx_1=gx_2} gX_2.$$
Also we define the $(\max\{n_1,n_2\})$-dimensional cubical cell complex $\Sigma$ as 
$$ \Sigma = \bigcup\{gX_1\,|\, g\in G\} \cup \bigcup\{gX_2\,|\, g\in G\}, $$
where we identify $gx_1=gx_2$ for any $g\in G$.
Then $\Sigma$ is contractible, since $G=G_1*G_2$ is the free-product of $G_1$ and $G_2$.
Moreover, $\Sigma$ is a CAT(0) space on which $G=G_1*G_2$ naturally acts geometrically.

Now we show that 
if the group $G=G_1*G_2$ acts geometrically on a CAT(0) space $Y$, 
then the actions of $G$ on $\Sigma$ and $Y$ satisfy the condition~$(*)$.

Suppose that the group $G=G_1*G_2$ acts geometrically on a CAT(0) space $Y$.
Then for each $i=1,2$, 
there exists $y_i\in Y$ such that the convex-hull $C(G_iy_i)$ is isometric to $\R^{n_i}$ 
by \cite[Theorem~II.7.1]{BH}.
Here we put $y_0:=y_1$, $Y_1:=C(G_1y_0)$ and $Y_2:=C(G_2y_0)$.
Then $Y_1$ is isometric to $\R^{n_1}$ on which $G_1$ acts geometrically by lattice.

We show that $G_2$ acts cocompactly on $Y_2$.
Let $M=d_Y(y_0,C(G_2y_2))$.
Then for any $g,g'\in G_2$, we have that 
$d_Y(gy_0,C(G_2y_2))\le M$ and $d_Y(g'y_0,C(G_2y_2))\le M$.
Hence $[gy_0,g'y_0]\subset B(C(G_2y_2),M)$, 
since $Y$ is a CAT(0) space and $C(G_2y_2)$ is convex.
Also for any $y,y'\in Y_2=C(G_2y_0)$, 
if $d_Y(y,C(G_2y_2))\le M$ and $d_Y(y',C(G_2y_2))\le M$ then 
$[y,y']\subset B(C(G_2y_2),M)$, since $Y$ is a CAT(0) space and $C(G_2y_2)$ is convex.
Thus we obtain that $Y_2=C(G_2y_0)\subset B(C(G_2y_2),M)$.
Let $N>0$ such that $G_2 B(y_2,N)=Y_2$.
Then we note that $B(C(G_2y_2),M)\subset G_2 B(y_2,M+N)$.
Hence 
$$Y_2=C(G_2y_0)\subset B(C(G_2y_2),M)\subset G_2 B(y_2,M+N).$$
Here $B(y_2,M+N)$ is compact.
Thus the action of $G_2$ on $Y_2$ is cocompact.
This implies that the group $G_2$ acts geometrically on the CAT(0) space $Y_2$.

By \cite[Theorem~II.7.1]{BH}.
there exists $y'_2\in Y_2$ such that the convex-hull $C(G_2y'_2)\subset Y_2$ is isometric to $\R^{n_2}$.
Let $Y'_2:=C(G_2y'_2)$.

Since the action of $G$ on $Y$ is cocompact, 
there exists $\bar{N}>0$ such that $G B(y_0,\bar{N})=Y$.
Then we put $\bar{Y_1}=B(Y_1,\bar{N})$ and $\bar{Y_2}=B(Y_2,\bar{N})$.
Here we note that $y_0\in Y_1\cap Y_2$ and $G (\bar{Y_1}\cap\bar{Y_2})=Y$.
Hence 
\begin{align*}
Y&=\bigcup\{g\bar{Y_1}\,|\,g\in G\} \ \text{and} \\
Y&=\bigcup\{g\bar{Y_2}\,|\,g\in G\}.
\end{align*}

Here we show that 
the set $\bar{Y_1}\cap \bar{Y_2}$ is bounded.
Indeed if $\bar{Y_1}\cap \bar{Y_2}$ is unbounded, then 
there exists $\alpha\in \partial (\bar{Y_1}\cap \bar{Y_2})$.
Then for $i=1,2$, there exists a sequence $\{g_jy_0\,|\,j\in\N\}\subset G_iy_0$ 
which converges to $\alpha$ in $Y\cup\partial Y$.
Hence $\alpha\in L(G_i)$ for each $i=1,2$, 
where $L(G_i)=\overline{G_iy_0} \cap \partial Y$ which is the limit set of $G_i$ in $\partial Y$.
Then by \cite[Theorem~4.15]{Ho0}, 
$$\alpha\in L(G_1)\cap L(G_2)=L(G_1\cap G_2)=L(\{1\})=\emptyset,$$
because $G=G_1*G_2$ is the free-product of $G_1$ and $G_2$.
This is a contradiction.
Thus we obtain that $\bar{Y_1}\cap \bar{Y_2}$ is bounded.

We also note that $gG_1g^{-1}\cap hG_2h^{-1}=\{1\}$ for any $g,h\in G$ and 
$gG_ig^{-1}\cap hG_ih^{-1}=\{1\}$ 
for any $g,h\in G$ such that $gG_i\neq hG_i$ ($i=1,2$).
Hence, by a similar argument to above one, 
we also obtain that $g\bar{Y_1}\cap h\bar{Y_2}$ is bounded for any $g,h\in G$, and 
$g\bar{Y_i}\cap h\bar{Y_i}$ is bounded 
for any $g,h\in G$ such that $gG_i\neq hG_i$ ($i=1,2$).

By \cite[Theorem~I.8.10]{BH}, 
the set $$S=\{s\in G\,|\, B(x_0,\bar{N})\cap B(sx_0,\bar{N})\neq\emptyset\}-\{1\}$$
is a generating set of $G$.
We can write $S=\{s_1,\dots,s_k\}\cup\{s'_1,\dots,s'_l\}$ where 
$S_1=\{s_1,\dots,s_k\}$ generates $G_1$ and 
$S_2=\{s'_1,\dots,s'_l\}$ generates $G_2$.
Here $S_1\cap S_2=\emptyset$, since $G=G_1*G_2$.

For any $g\in G$, 
we can write $g=g_1h_1\cdots g_nh_n$ for some $g_i\in G_1$ and $h_i\in G_2$ 
(where it may $g_1=1$ or $h_n=1$).
Then 
$$ [x_0,gx_0]=[x_0,g_1x_0]\cup[g_1x_0,g_1h_1x_0]\cup\dots \cup [g_1h_1\cdots g_nx_0,gx_0], $$
in $\Sigma$.
By the structure of $G$ and the argument above, we have that 
\begin{align*}
&[y_0,gy_0] \cap B(g_1h_1\cdots g_{i-1}h_{i-1} g_i y_0,\bar{N}) \neq \emptyset \ \text{and} \\
&[y_0,gy_0] \cap B(g_1h_1\cdots g_ih_i y_0,\bar{N}) \neq \emptyset
\end{align*}
for any $i=1,\dots,n$ in $Y$.

Hence, we obtain that 
the geometric action of $G$ on $\Sigma$ and 
any geometric action of $G$ on any CAT(0) space $Y$ satisfy the condition~$(*)$.

Therefore, the group $G=G_1*G_2$ is an equivariant rigid CAT(0) group.
\end{Example}

\begin{Example}
By the same argument as above, 
we obtain that groups of the form 
$$G=\Z^{n_1}*\dots *\Z^{n_k}$$ 
($n_i\in \N$) 
are equivariant rigid CAT(0) groups.

Moreover, we can obtain that 
groups of the form 
$$G=\Z^{n_1}*\dots *\Z^{n_k}*A_1*\dots * A_l$$ 
(where $n_i\in \N$ and each $A_j$ is a finite group) 
are equivariant rigid CAT(0) groups.
\end{Example}

\section{Examples of non equivariant rigid CAT(0) groups}

As an application of Theorem~\ref{Thm2} and the condition $(**)$, 
we introduce some examples.

\begin{Example}\label{example1}
Let $G= F_2\times \Z$, where $F_2$ is the rank 2 free group generated by $\{a,b\}$.
Let $T$ and $T'$ be the Cayley graphs of $F_2$ with respect to the generating set $\{a,b\}$ 
such that 
\begin{enumerate}
\item in $T$, all edges $[g,ga]$ and $[g,gb]$ ($g\in F_2$) have the unit length, and 
\item in $T'$, the length of $[g,ga]$ is $2$ and the length of $[g,gb]$ is $1$ for any $g\in F_2$
\end{enumerate}
(see Figure~1). Here we note that $F_2$ acts naturally and geometrically on $T$ and $T'$.

\begin{figure}[htbp]
\begin{center}
\unitlength=0.9mm
\begin{picture}(130,60)
\put(0,30){\line(1,0){40}}
\put(20,10){\line(0,1){40}}
\put(14,38){\line(1,0){12}}
\put(14,22){\line(1,0){12}}
\put(12,24){\line(0,1){12}}
\put(28,24){\line(0,1){12}}
\put(18,43){\line(1,0){4}}
\put(18,17){\line(1,0){4}}
\put(7,28){\line(0,1){4}}
\put(33,28){\line(0,1){4}}
\put(9,27){\line(1,0){6}}
\put(9,33){\line(1,0){6}}
\put(25,27){\line(1,0){6}}
\put(25,33){\line(1,0){6}}
\put(17,19){\line(0,1){6}}
\put(23,19){\line(0,1){6}}
\put(17,35){\line(0,1){6}}
\put(23,35){\line(0,1){6}}
\put(18,3){$T$}
\put(50,30){\line(1,0){80}}
\put(90,10){\line(0,1){40}}
\put(78,38){\line(1,0){24}}
\put(78,22){\line(1,0){24}}
\put(74,24){\line(0,1){12}}
\put(106,24){\line(0,1){12}}
\put(86,43){\line(1,0){8}}
\put(86,17){\line(1,0){8}}
\put(64,28){\line(0,1){4}}
\put(116,28){\line(0,1){4}}
\put(68,27){\line(1,0){12}}
\put(68,33){\line(1,0){12}}
\put(100,27){\line(1,0){12}}
\put(100,33){\line(1,0){12}}
\put(84,19){\line(0,1){6}}
\put(96,19){\line(0,1){6}}
\put(84,35){\line(0,1){6}}
\put(96,35){\line(0,1){6}}
\put(88,3){$T'$}
\end{picture}
\end{center}
\caption{}
\label{fig1}
\end{figure}

Let $X=T\times \R$ and $Y=T'\times \R$.
We consider the natural actions of the group $G$ on the CAT(0) spaces $X$ and $Y$.

Then the group $G$ acts geometrically on the two CAT(0) spaces $X$ and $Y$, 
and the quasi-isometry $gx_0 \mapsto gy_0$ 
(where $x_0=(1,0)\in X$ and $y_0=(1,0)\in Y$) 
does not extend continuously to 
any map from $\partial X$ to $\partial Y$.

Indeed, for example, 
we can consider the sequence $\{g_n\,|\, n \in \N \} \subset F_2$ such that 
$g_1=ab$ and
$$ g_n= 
\begin{cases}
g_{n-1}a^{2^{n-1}} & \mbox{if $n$ is even}\\
g_{n-1}b^{2^{n-1}} & \mbox{if $n$ is odd}
\end{cases}
$$
for $n\ge 2$.
Here we note that the length of the words of $g_n$ in $F_2$ is $2^n$.
Let $\bar{g}_n=(g_n,2^n) \in F_2\times \Z$ for $n\in \N$.
Then the sequence $\{\bar{g}_n x_0\}$ is a Cauchy sequence in $X\cup\partial X$ and 
converges to the point $[\alpha,\frac{\pi}{4}]$ 
where $\alpha \in \partial T$ to which $\{g_n\}$ converges.
On the other hand, 
the sequence $\{\bar{g}_n y_0\}$ is {\it not} a Cauchy sequence in $Y\cup\partial Y$.
Thus by the condition $(**)$ and Theorem~\ref{Thm2}, 
the map $\phi:Gx_0\rightarrow Gy_0 \ (gx_0 \mapsto gy_0)$ does not continuously 
extend to any map $\bar{\phi}:\partial X \rightarrow \partial Y$.

Here we note that the group 
$G= F_2\times \Z$ is a rigid CAT(0) group whose boundary is 
the suspension of the Cantor set.
\end{Example}

\begin{Example}\label{example2}
By the argument in Example~\ref{example1}, 
we obtain that every CAT(0) group of the form 
$G=F \times H$ 
where $F$ is a free group of rank $n\ge 2$ and $H$ is an infinite CAT(0) group, 
is non equivariant rigid.

Indeed 
we can consider the Cayley graphs $T$ and $T'$ of $F$ 
with respect to the generating set $\{a_1,\dots,a_n\}$ of $F$ 
such that 
\begin{enumerate}
\item in $T$, all edges $[g,ga_i]$ ($g\in F$, $i=1,\dots,n$) have the unit length, and 
\item in $T'$, the length of $[g,ga_1]$ ($g\in F$) is $2$ 
and the length of $[g,ga_i]$ ($g\in F$, $i=2,\dots,n$) is $1$.
\end{enumerate}
Let $Y$ be a CAT(0) space on which $H$ acts geometrically and 
let $X=T\times Y$ and $X'=T'\times Y$.
Since $H$ is an infinite CAT(0) group, 
there exists $h_0 \in H$ such that the order $o(h_0)=\infty$ (\cite[Theorem~11]{Sw}).
Then $h_0$ is a hyperbolic isometry of $Y$.
Let $\sigma_0$ be an axis for $h_0$ in $Y$.
Then, the natural actions of $G=F \times H$ on $X$ and $X'$ 
does not continuously 
extend to any map between boundaries $\partial X$ and $\partial X'$ 
by the same argument as Example~\ref{example1}.
Indeed 
$F \times \langle h_0 \rangle \subset F \times H =G$, 
$T \times \Image \sigma_0 \subset X$ and 
$T' \times \Image \sigma_0 \subset X'$.

Here we note that 
$\partial X=\partial T * \partial Y$ and 
$\partial X'=\partial T' * \partial Y$ are homeomorphic.
Also, if $G=F\times H$ acts geomretrically on a CAT(0) space $X$ 
then the boundary $\partial X$ is homeomorphic to $\partial F * \partial Z$ 
where $Z$ is some CAT(0) space on which $H$ acts geometrically 
by the splitting theorem in \cite{Ho}.
Hence $G$ is rigid if and only if $H$ is rigid.
\end{Example}

\section{Remarks and questions}

The author thinks that 
the main theorems, the conditions $(*)$ and $(**)$ 
and some arguments in this paper 
could be used to investigate boundaries of CAT(0) groups and interesting open problems on 
\begin{enumerate}
\item[(1)] (equivariant) rigidity of boundaries of CAT(0) groups; 
\item[(2)] (equivariant) rigidity of boundaries of Coxeter groups; 
\item[(3)] (equivariant) rigidity of boundaries of Davis complexes of Coxeter groups; 
\item[(4)] (equivariant) rigidity of boundaries of CAT(0) spaces 
on which Coxeter groups act geometrically as reflection groups;
\item[(5)] (equivariant) rigidity of boundaries of CAT(0) spaces 
on which right-angled Coxeter groups act geometrically as reflection groups;
\item[(6)] (equivariant) rigidity of boundaries of CAT(0) cubical complexes 
on which CAT(0) groups act geometrically;
\item[(7)] (equivariant) rigidity of boundaries of CAT(0) {\it cuboidal} complexes 
on which CAT(0) groups act geometrically, 
\end{enumerate}
etc.

Here we can find some recent research on CAT(0) groups and their boundaries 
in \cite{CS}, \cite{CK}, \cite{GO}, \cite{Ha}, \cite{Ho}, \cite{Hr}, \cite{MR}, 
\cite{Mon}, \cite{Moo1}, \cite{Moo2}, \cite{PS} and \cite{W}.
Details of Coxeter groups and Coxeter systems are 
found in \cite{Bo}, \cite{Br} and \cite{Hu}, and 
details of Davis complexes which are CAT(0) spaces defined by Coxeter systems 
and their boundaries are 
found in \cite{D1}, \cite{D2} and \cite{M}.
We can find some recent research on boundaries of Coxeter groups 
in \cite{CF}, \cite{Dr}, \cite{Dr2}, \cite{F}, \cite{Hos}, \cite{MRT}.
Here we note that 
every cocompact discrete reflection group of a geodesic space 
becomes a Coxeter group \cite{Ho5}, and 
if a Coxeter group $W$ is a cocompact discrete reflection group 
of a CAT(0) space $X$, 
then the CAT(0) space $X$ has a structure similar to the Davis complex \cite{Ho6}.

\vspace*{3mm}

The following theorem is known.

\begin{Theorem}[{\cite[Theorem 5.2]{Ho}}]
If $G_1$ and $G_2$ are boundary rigid CAT(0) groups, 
then so is $G_1\times G_2$.
\end{Theorem}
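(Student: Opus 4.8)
The plan is to reduce boundary rigidity of $G_1\times G_2$ to that of the two factors by means of a join decomposition of the boundary, i.e.\ the mechanism already exploited in Example~\ref{example2}.

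First I would set up the problem: let $Z$ and $Z'$ be arbitrary CAT(0) spaces on which $G=G_1\times G_2$ acts geometrically, with the goal of producing a homeomorphism $\partial Z\cong\partial Z'$. The crucial input is the splitting theorem of \cite{Ho}: since $G$ is a direct product, a geometric action of $G$ on a CAT(0) space $W$ yields a homeomorphism $\partial W\cong\partial W_1*\partial W_2$, where $*$ denotes the topological join and each $W_i$ is a CAT(0) space carrying a geometric action of $G_i$. Applying this to $W=Z$ and to $W=Z'$ produces CAT(0) spaces $Z_1,Z_2,Z'_1,Z'_2$ --- with $G_i$ acting geometrically on $Z_i$ and on $Z'_i$ --- together with homeomorphisms $\partial Z\cong\partial Z_1*\partial Z_2$ and $\partial Z'\cong\partial Z'_1*\partial Z'_2$. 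If one factor, say $G_2$, happens to be finite, the corresponding space is bounded, its boundary is empty, and the join collapses to $\partial Z_1$; the rest of the argument is unaffected.

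Next I would feed in the rigidity of the factors. Since $G_1$ is boundary rigid and $Z_1,Z'_1$ both carry geometric $G_1$-actions, $\partial Z_1\cong\partial Z'_1$; likewise $\partial Z_2\cong\partial Z'_2$. Then I would invoke functoriality of the join: homeomorphisms $f_i:\partial Z_i\to\partial Z'_i$ induce a homeomorphism $f_1*f_2:\partial Z_1*\partial Z_2\to\partial Z'_1*\partial Z'_2$ via $(a,b,t)\mapsto(f_1(a),f_2(b),t)$ on the join coordinates, and since all the spaces involved are compact metrizable this continuous bijection is automatically a homeomorphism. Chaining the homeomorphisms gives $\partial Z\cong\partial Z'$, and as $Z,Z'$ were arbitrary, $G_1\times G_2$ is boundary rigid.

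The hard part is entirely concentrated in the splitting theorem: one must know that a geometric action of $G_1\times G_2$ on a CAT(0) space forces a \emph{join} decomposition of the boundary whose two pieces arise from honest geometric actions of $G_1$ and of $G_2$. This is delicate because the CAT(0) space $W$ itself need not be a metric product --- the product structure on the group does not directly produce a product structure on $W$ --- so the decomposition has to be extracted at the level of the boundary. Once that theorem of \cite{Ho} is available, the remainder (factor rigidity together with homeomorphism-invariance of the join) is routine.
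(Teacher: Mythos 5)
Your argument is correct and is exactly the mechanism this paper itself relies on: the statement is quoted from \cite[Theorem 5.2]{Ho} without proof here, but the surrounding text (Example~6.2 and the remark at the end of Section~8) invokes precisely the join decomposition $\partial X\cong\partial X_1*\partial X_2$ with $G_i$ acting geometrically on $X_i$, followed by factorwise rigidity and functoriality of the join. So your reduction matches the intended proof; the only substantive input, as you note, is the splitting theorem of \cite{Ho}.
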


On research of (equivariant) boundary rigidity of CAT(0) groups, 
the following natural open problem is important.

\begin{Problem}
If $G_1$ and $G_2$ are (equivariant) boundary rigid CAT(0) groups, then is $G_1*G_2$ also?
\end{Problem}

We consider this problem for (4) above in Section~8 
and we provide a conjecture on this problem for (7) above in Section~9.

\section{On equivariant rigid as reflection groups}

A Coxeter group $W$ is said to be {\it equivariant rigid as a reflection group}, if 
for any two CAT(0) spaces $X$ and $Y$ on which $W$ acts geometrically 
as a {\it cocompact discrete reflection group} of $X$ and $Y$ (cf.\ \cite{Ho5}), 
it obtain a $W$-equivariant homeomorphism of 
the boundaries $\partial X$ and $\partial Y$ 
as a continuous extension of the quasi-isometry 
$\phi:Wx_0\rightarrow Wy_0$ defined by $\phi(wx_0)=wy_0$, 
where $x_0\in X$ and $y_0\in Y$.

Then we show the following.

\begin{Theorem}\label{Thm4}
The following statements hold.
\begin{enumerate}
\item[${\rm (i)}$] If Coxeter groups $W_1$ and $W_2$ are equivariant rigid 
as reflection groups, then $W_1*W_2$ is also.
\item[${\rm (ii)}$] For a Coxeter group $W=W_A*_{W_{A\cap B}}W_B$ where $W_{A\cap B}$ is finite, 
if $W$ determines its Coxeter system up to isomorphism, and 
if the parabolic subgroups $W_A$ and $W_B$ are equivariant rigid as reflection groups then $W$ is also.
\end{enumerate}
\end{Theorem}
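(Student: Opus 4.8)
The plan is to reduce both statements, via Theorem~\ref{Thm2}, to verifying the Cauchy-sequence condition $(**)$ for an arbitrary pair of cocompact discrete reflection actions of $W$ on CAT(0) spaces $X$ and $Y$. In each case $W$ splits as a tree of groups (as $W_1*W_2$, respectively as $W_A*_{W_{A\cap B}}W_B$) with Bass--Serre tree $\mathcal{T}$ depending only on the splitting. The idea is to cut $X$ and $Y$ $W$-equivariantly into convex \emph{vertex pieces} indexed by the vertices of $\mathcal{T}$, to show that the resulting decomposition is a tree of spaces over $\mathcal{T}$ with uniformly bounded overlaps and empty pairwise limit sets, and to observe that each vertex piece carries a cocompact discrete reflection action of the corresponding vertex group, so that the equivariant rigidity hypothesis together with Theorem~\ref{Thm2} yields condition $(**)$ on each piece. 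This mirrors, with flats replaced by arbitrary reflection-group CAT(0) spaces, the treatment of $\Z^{n_1}*\dots*\Z^{n_k}$ in Section~5.

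For part (i), write $(W,S)=(W_1*W_2,\,S_1\sqcup S_2)$ and fix $x_0\in X$, $y_0\in Y$. For each coset $gW_i$ put $X_{gW_i}=C(gW_ix_0)$ and $Y_{gW_i}=C(gW_iy_0)$. First, $gW_ig^{-1}$ acts cocompactly on $X_{gW_i}$ --- the convexity argument used in Section~5 to see that $G_2$ acts cocompactly on $Y_2$ applies verbatim --- and, by \cite{Ho5} and \cite{Ho6} (together with the fact that free factors of a Coxeter group are parabolic up to conjugacy in any Coxeter system, which makes the hypothesis on the Coxeter system unnecessary here), it acts as a cocompact discrete reflection group; likewise in $Y$. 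Since $W_i$ is equivariant rigid as a reflection group, the orbit quasi-isometry $gW_ig^{-1}x_0\to gW_ig^{-1}y_0$ extends to an equivariant homeomorphism $\partial X_{gW_i}\to\partial Y_{gW_i}$, so by Theorem~\ref{Thm2} condition $(**)$ holds for these restricted actions. Next, for distinct cosets $c\neq c'$ the corresponding conjugates of $W_1,W_2$ intersect trivially, so $L(c)\cap L(c')=L(\{1\})=\emptyset$ by \cite[Theorem~4.15]{Ho0}; hence $\partial X_c\cap\partial X_{c'}=\emptyset$, and cocompactness of the $W$-action upgrades this to a uniform bound on $\diam(X_c\cap X_{c'})$, and similarly in $Y$. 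Finally, exactly as in Section~5, a geodesic $[x_0,gx_0]$ with $g=g_1\cdots g_n$ in normal form passes, in order, within a uniform distance of the pieces $X_{g_1\cdots g_{k-1}W_{i_k}}$ and of the gate points $g_1\cdots g_kx_0$, and the same combinatorics governs $[y_0,gy_0]$ in $Y$.

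With this structure in hand one checks $(**)$ for the $W$-actions. Let $\{h_j\}\subset W$ with $\{h_jx_0\}$ Cauchy in $X\cup\partial X$, converging to $\alpha\in\partial X$. Every such $\alpha$ is either in $\partial X_c$ for some coset $c$, or is an end of $\mathcal{T}$. In the first case, with $c=gW_i$, the points $h_jx_0$ eventually lie in a bounded neighborhood of $X_c$, so $h_j=gw_jg^{-1}f_j$ with $w_j\in W_i$ and $f_j$ in a fixed finite set; passing to a subsequence on which $f_j$ is constant, and using that $\partial X_c$ embeds topologically in $\partial X$ (as $X_c$ is convex), condition $(**)$ on the piece forces $\{h_jy_0\}$ to be Cauchy in $Y\cup\partial Y$. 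In the second case $\{h_jx_0\}$ escapes to an end of $\mathcal{T}$, i.e.\ the normal forms of the $h_j$ share ever longer prefixes; since $Y$ tracks the same tree $\mathcal{T}$, $\{h_jy_0\}$ escapes to the corresponding end and is Cauchy in $Y\cup\partial Y$. The converse is symmetric, so $(**)$ holds and Theorem~\ref{Thm2} produces the desired $W$-equivariant homeomorphism; as $X$ and $Y$ were arbitrary, $W_1*W_2$ is equivariant rigid as a reflection group. Part (ii) follows the same scheme with the amalgam $W=W_A*_{W_{A\cap B}}W_B$ in place of the free product: the hypothesis that $W$ determines its Coxeter system up to isomorphism guarantees that, after composing with a twisting automorphism, an arbitrary cocompact discrete reflection $W$-action realizes $S=A\cup B$ as its set of reflection generators and hence exhibits the splitting, so that the convex vertex pieces $X_{gW_A},X_{gW_B}$ are defined and carry cocompact discrete reflection actions of $W_A$, $W_B$ as above; the edge pieces $C(gW_{A\cap B}x_0)$ are \emph{bounded} because $W_{A\cap B}$ is finite, and distinct vertex pieces still have empty common limit set since their conjugates meet in a finite conjugate of $W_{A\cap B}$ or trivially.

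The main obstacle, in both parts, is to establish the tree-of-spaces structure with \emph{uniform} constants: the gate/geodesic-tracking lemma, the uniform bound on pairwise overlaps, and the local finiteness of the pattern of pieces meeting a ball, which are precisely what make the two-case analysis of $(**)$ run. For part (ii) there is the additional point of invoking the rigidity of the Coxeter system of $W$ correctly so that \emph{every} cocompact discrete reflection $W$-action --- not merely the Davis complex --- displays the amalgam splitting, ensuring that the equivariant rigidity hypotheses on $W_A$ and $W_B$ are applicable; here the finiteness of $W_{A\cap B}$ is essential exactly because it makes the edge pieces bounded and prevents distinct vertex pieces from sharing boundary points.
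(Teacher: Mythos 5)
Your overall strategy---decompose $X$ and $Y$ equivariantly into convex pieces indexed by the vertices of the Bass--Serre tree of the splitting, observe that each piece carries a cocompact discrete reflection action of the corresponding free factor (resp.\ of $W_A$, $W_B$), invoke the equivariant rigidity of the factors on the pieces, and treat separately the boundary points coming from a single piece and those coming from an end of the tree---is essentially the paper's proof. The only structural difference is that you route everything through condition~$(**)$ and Theorem~\ref{Thm2}, whereas the paper constructs $\bar{\phi}$ directly on $\partial X$ by using the word--geodesic tracking theorem of \cite{Ho6} (reduced words $s_1\cdots s_n$ track geodesic rays within the chamber diameter) and then transporting the tracking path to $Y$; the pieces in the paper are the chamber unions $W_i\overline{C}$ rather than convex hulls of orbits, which is an immaterial difference.

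There is, however, one step in your verification of $(**)$ that is false as written. In the case $\alpha\in\partial X_c$ with $c=gW_i$, you assert that the points $h_jx_0$ ``eventually lie in a bounded neighborhood of $X_c$,'' and from this you extract $h_j=gw_jg^{-1}f_j$ with $f_j$ in a fixed finite set. A sequence converging to a point of $\partial X_c$ need not stay near $X_c$ at all: take $W=W_1*W_2$ with $W_1=\langle a,b\rangle$ and $W_2=\langle c,d\rangle$ both infinite dihedral, $\alpha=(ab)^{\infty}\in\partial X_{W_1}$, and $h_j=(ab)^j(cd)^j$. Then $[x_0,h_jx_0]$ coincides with $\xi_\alpha$ out to parameter $\approx 2j$, so $h_jx_0\to\alpha$, yet $d_X(h_jx_0,X_{W_1})\approx 2j\to\infty$. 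Consequently your reduction of case~1 to condition~$(**)$ on the single piece does not go through as stated. The repair is available from the tree-of-spaces structure you already set up: what converges to $\alpha$ inside $X_c\cup\partial X_c$ is the sequence of \emph{gate points} $p_jx_0$, where $p_j\in gW_i$ is the longest prefix of the normal form of $h_j$ lying in the coset $c$ (equivalently, the projection of $h_jx_0$ to the convex set $X_c$, up to a uniform constant). Equivariant rigidity of $W_i$ applied to the pieces then gives $p_jy_0\to\bar{\alpha}$ in $Y_c\cup\partial Y_c$, and since $[y_0,h_jy_0]$ passes within a uniform distance of $p_jy_0$ with $d_Y(y_0,p_jy_0)\to\infty$, the neighborhood basis $U'_{Y\cup\partial Y}(\bar{\alpha};r,\epsilon)$ forces $h_jy_0\to\bar{\alpha}$. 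This is exactly the mechanism the paper uses (via \cite[Theorem~3]{Ho6}); with that substitution your argument closes, and the same caveat applies verbatim to your treatment of part~(ii).
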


\begin{proof}
(i) 
Let $W_1$ and $W_2$ be Coxeter groups that are equivariant rigid as reflection groups and let $W=W_1 * W_2$.
Suppose that the Coxeter group $W$ acts geometrically on two CAT(0) spaces $X$ and $Y$ 
as cocompact discrete reflection groups.

Let $(W,S)$ and $(W,S')$ be Coxeter systems obtained from 
the actions of $W$ on $X$ and $Y$ as \cite{Ho5} respectively, 
and let $C$ and $D$ be chambers as $W \overline{C} =X$ and $W \overline{D}=Y$.

Then $S$ and $S'$ separate as the disjoint unions $S=S_1 \cup S_2$ and $S'=S'_1 \cup S'_2$ 
such that $W_{S_1}=W_{S'_1}=W_1$ and $W_{S_2}=W_{S'_2}=W_2$, since $W=W_1*W_2$.

Let $X_i=W_i\overline{C}$ and $Y_i=W_i\overline{D}$ for $i=1,2$.
Then each Coxeter group $W_i$ is cocompact discrete reflection group of $X_i$ and $Y_i$ (cf.\ \cite{Ho6}).
Since $W_i$ is equivariant rigid as reflection groups, 
it obtain $W_i$-equivariant homeomorphism 
$\bar{\phi}_i:\partial X_i\rightarrow \partial Y_i$ 
as a continuous extension of the quasi-isometry 
$\phi_i:W_ix_0\rightarrow W_iy_0$ defined by $\phi_i(wx_0)=wy_0$, 
where $x_0\in C \subset X_1\cap X_2$ and $y_0\in D \subset Y_1\cap Y_2$.

Let $N=\diam \overline{C}$ and $M=\diam \overline{D}$. 
Then $WB(x_0,N)=X$ and $WB(y_0,M)=Y$.

Now we define a map $\bar{\phi}:\partial X \rightarrow \partial Y$ naturally as follows.

Let $\alpha \in \partial X$ and 
let $\xi_\alpha$ be the geodesic ray in $X$ 
with $\xi_\alpha(0)=x_0$ and $\xi_\alpha(\infty)=\alpha$.

By \cite[Theorem~3]{Ho6}, 
there exists a sequence $\{s_i\,|\,i\in\N\}\subset S$ 
such that 
each $w_n=s_1\cdots s_n$ is a reduced representation and 
$d_H(\Image\xi_\alpha,P) \le N$, 
where $d_H$ is the Hausdorff distance and 
$$P=[x_0,s_1x_0]\cup[s_1x_0,s_1s_2x_0]\cup\dots\cup[s_1\cdots s_{n-1}x_0,w_nx_0]\cup\cdots. $$

Here since $w_n\in W=W_1*W_2$ for any $n\in \N$, 
we can write $w_n=a_1b_1\cdots a_kb_k$ for some $a_i\in W_1$ and $b_i\in W_2$.

Then either 
\begin{enumerate}
\item[(1)] there exists $n_0\in \N$ such that $w_n \in w_{n_0}W_1$ for any $n\ge n_0$, 
that is, $\alpha \in w_{n_0} \partial X_1$, 
\item[(2)] there exists $n_0\in \N$ such that $w_n \in w_{n_0}W_2$ for any $n\ge n_0$, 
that is, $\alpha \in w_{n_0} \partial X_2$, or
\item[(3)] the sequences $\{a_i\}\subset W_1$ and $\{b_i\}\subset W_2$ are infinite, 
that is, $\alpha \not\in \bigcup\{w(\partial X_1\cup \partial X_2)\,|\,w\in W\}$.
\end{enumerate}

In the case (1), 
the sequence $\{w_{n_0}s_{n_0+1}\cdots s_m y_0\,|\,m> n_0\}$ converges to 
some point $\bar{\alpha}\in w_{n_0}\partial Y_1$, 
since $W_1$ is equivariant rigid as reflection groups.
Then we define $\bar{\phi}(\alpha)=\bar{\alpha}$.
(Here we note that $\bar{\phi}(\alpha)=w_{n_0}\bar{\phi}_1(\beta)$ 
where $\beta$ is the point of $\partial X_1$ 
to which the sequence $\{s_{n_0+1}\cdots s_m x_0\,|\,m> n_0\}$ 
converges in $X_1 \cup\partial X_1$.)

Also in the case (2), similarly, 
we define $\bar{\phi}(\alpha)=\bar{\alpha}$ 
where $\bar{\alpha}\in w_{n_0}\partial Y_2$ is the point 
to which the sequence $\{w_{n_0}s_{n_0+1}\cdots s_m y_0\,|\,m> n_0\}$ converges.

In the case (3), 
$$ s_1s_2s_3\cdots\cdots s_n\cdots\cdots = a_1b_1a_2b_2 \cdots a_kb_k \cdots \cdots, $$
where $a_i\in W_1$ and $b_i\in W_2$.
Here we note that $X_1\cap X_2=\overline{C}$ and 
$Y_1\cap Y_2=\overline{D}$ are bounded and compact.
By \cite[Theorem~3]{Ho6},
$$P \cap B(a_1b_1\cdots a_kb_k x_0,N) \neq\emptyset$$ 
for any $k\in\N$.
Then the sequence $\{a_1b_1\cdots a_kb_k y_0\,|\,k\in\N\}$ 
converges to some point $\bar{\alpha}\in \partial Y$ in $Y \cup\partial Y$, 
because for each $1<i<k$, 
$$ d(a_1b_1\cdots a_ib_i y_0,[y_0,a_1b_1\cdots a_kb_k y_0]) \le M $$
by \cite[Theorem~3]{Ho6}.
We define $\bar{\phi}(\alpha)=\bar{\alpha}$.

The map $\bar{\phi}:\partial X \rightarrow \partial Y$ defined above 
is well-defined, and by similar arguments in sections above, 
we can show that $\bar{\phi}$ is bijective, continuous and 
a $W$-equivariant homeomorphism.

Therefore the Coxeter group $W$ is equivariant rigid as a reflection group.

\vspace{2.5mm}

(ii) 
Let $W=W_A*_{W_{A\cap B}}W_B$ be a Coxeter group where $W_{A\cap B}$ is finite.
Suppose that $W$ determines its Coxeter system up to isomorphism, 
the parabolic subgroups $W_A$ and $W_B$ are equivariant rigid as reflection groups, 
and the Coxeter group $W$ acts geometrically on two CAT(0) spaces $X$ and $Y$ 
as cocompact discrete reflection groups.

Then Coxeter systems $(W,S)$ and $(W,S')$ are obtained from 
the actions of $W$ on $X$ and $Y$ as \cite{Ho5} respectively, 
and let $C$ and $D$ be chambers as $W \overline{C} =X$ and $W \overline{D}=Y$.
Since $W$ determines its Coxeter system up to isomorphism, 
two Coxeter systems $(W,S)$ and $(W,S')$ are isomorphic.

By \cite{Ho6}, $X_A:=W_A\overline{C}$ and $X_B:=W_B\overline{C}$ are convex subspaces of $X$,
$Y_A:=W_A\overline{D}$ and $Y_B:=W_B\overline{D}$ are convex subspaces of $Y$, and 
$W_A$ (resp.\ $W_B$) acts geometrically on the two CAT(0) spaces $X_A$ and $Y_A$ (resp.\ $X_B$ and $Y_B$) 
as cocompact discrete reflection groups.

Since $W_A$ and $W_B$ are equivariant rigid as reflection groups,
it obtain a $W_A$-equivariant homeomorphism 
$\bar{\phi}_A:\partial X_A\rightarrow \partial Y_A$ and 
a $W_B$-equivariant homeomorphism 
$\bar{\phi}_B:\partial X_B\rightarrow \partial Y_B$ 
as continuous extensions of the quasi-isometries 
$\phi_A:W_Ax_0\rightarrow W_Ay_0$ defined by $\phi_A(wx_0)=wy_0$ and 
$\phi_B:W_Bx_0\rightarrow W_By_0$ defined by $\phi_B(wx_0)=wy_0$ respectively, 
where $x_0\in C\subset X_A\cap X_B$ and $y_0\in D\subset Y_A\cap Y_B$ 
respectively.

Then 
we can define a $W$-equivariant homeomorphism 
$\bar{\phi}:\partial X \rightarrow \partial Y$ naturally as a similar construction to (i), 
since $W_{A\cap B}$ is finite and 
$X_A\cap X_B=W_{A\cap B}\overline{C}$ and 
$Y_A\cap Y_B=W_{A\cap B}\overline{D}$ are bounded and compact.
\end{proof}

As an application of Theorem~\ref{Thm4}, 
we introduce some examples.

\begin{Example}
By Theorem~\ref{Thm4}~(i), any group of the form 
$$W=W_1 * \dots * W_n$$ 
where each $W_i$ is a Gromov hyperbolic Coxeter group,
an affine Coxeter group or a finite Coxeter group, 
is equivariant rigid as a reflection group.
\end{Example}

\begin{Example}\label{exampleX}
By Theorem~\ref{Thm4}~(ii), any Coxeter group of the form 
$$W=(\cdots(W_{A_1} *_{W_{B_1}} W_{A_2})*_{W_{B_2}} W_{A_3})*_{W_{B_3}} \cdots)*_{W_{B_{n-1}}} W_{A_n}$$ 
where each $W_{A_i}$ is 
a Gromov hyperbolic Coxeter group, an affine Coxeter group or a finite Coxeter group, 
each $W_{B_i}$ is finite and $W$ determines its Coxeter system up to isomorphism, 
is equivariant rigid as a reflection group.
\end{Example}

\begin{Example}
For example, by Theorem~\ref{Thm4}~(ii) and Example~\ref{exampleX}, 
the Coxeter groups defined by the diagrams in Figure~2 are 
equivariant rigid as reflection groups.
Here these Coxeter groups determine their Coxeter systems up to isomorphism 
by \cite{Ho00} and \cite{Ra}.
\begin{figure}[htbp]
\begin{center}
\unitlength=0.65mm
{\scriptsize
\begin{picture}(120,45)(-5,-5)
\put(0,0){\line(1,0){50}}
\put(0,25){\line(1,0){50}}
\put(0,0){\line(0,1){25}}
\put(25,0){\line(0,1){25}}
\put(50,0){\line(0,1){25}}
%
%
%
{\small
\put(-1.5,-1){$\bullet$}
\put(23.5,-1){$\bullet$}
\put(48.5,-1){$\bullet$}
\put(-1.5,24){$\bullet$}
\put(23.5,24){$\bullet$}
\put(48.5,24){$\bullet$}
}
\put(11,-5){$2$}
\put(36,-5){$2$}
\put(11,27){$2$}
\put(36,27){$2$}
\put(-3,12){$2$}
\put(26,12){$2$}
\put(51,12){$2$}
\put(80,0){\line(1,0){30}}
\put(80,20){\line(1,0){30}}
\put(80,0){\line(0,1){20}}
\put(110,0){\line(0,1){20}}
\put(80,20){\line(3,-2){30}}
\put(80,20){\line(1,1){15}}
\put(110,20){\line(-1,1){15}}
{\small
\put(78.5,-1){$\bullet$}
\put(78.5,19){$\bullet$}
\put(108.5,-1){$\bullet$}
\put(108.5,19){$\bullet$}
\put(93.5,34){$\bullet$}
}
%
%
%
\put(77,9.5){$4$}
\put(93,-5){$4$}
\put(88.5,8.5){$4$}
\put(111,9.5){$2$}
\put(93.5,21){$4$}
\put(84,28){$4$}
\put(102.7,28){$2$}
\end{picture}
}
\end{center}
\caption{}
\label{fig2}
\end{figure}
\end{Example}

By similar arguments to Examples~\ref{example1} and \ref{example2}, 
we can construct non equivariant rigid Coxeter groups 
(as reflection groups).

\begin{Example}\label{exampleA}
Let $F=\Z_2*\Z_2*\Z_2$, let $G=\Z_2*\Z_2$ and let $W=F\times G$.
Then $W$ is a non equivariant rigid Coxeter group 
(as a reflection group).

Indeed we can consider the Cayley graphs $T$ and $T'$ of $F$ 
with respect to the generating set $\{a_1,a_2,a_3\}$ of $F$ 
such that 
\begin{enumerate}
\item in $T$, all edges $[w,wa_i]$ ($w\in F$, $i=1,2,3$) have the unit length,
\item in $T'$, the length of $[w,wa_1]$ ($w\in F$) is $2$ 
and the length of $[w,wa_i]$ ($w\in F$, $i=2,3$) is $1$.
\end{enumerate}
Let $X=T\times \R$ and $X'=T'\times \R$.
Then, the natural actions of $W=F \times G$ on $X$ and $X'$ 
does not continuously 
extend to any map between the boundaries $\partial X$ and $\partial X'$ 
by the same argument as Example~\ref{example1}.

Here we note that the group 
$W=F\times G$ is a rigid CAT(0) group whose boundary is 
the suspension of the Cantor set.
\end{Example}

\begin{Example}\label{exampleB}
By the arguments in Examples~\ref{example1}, \ref{example2} and \ref{exampleA},
every Coxeter group of the form $W=F\times G$ 
where $F=\Z_2*\dots*\Z_2$ with rank $n\ge 3$ and $G$ is an infinite Coxeter group, 
is a non equivariant rigid (as a reflection group).
\end{Example}

Here we note that if $W=F\times G$ acts geometrically on a CAT(0) space $X$ 
then the boundary $\partial X$ is homeomorphic to $\partial F * \partial Y$ 
where $Y$ is some CAT(0) space on which $G$ acts geometrically 
by the splitting theorem in \cite{Ho}.
Hence $W$ is rigid if and only if $G$ is rigid.


\section{Conjecture}

Now we consider the CAT(0) group $$G=(F_2\times \Z)*\Z_2$$ 
where $F_2$ is the free group of rank 2.

We note that $F_2\times \Z$ is a rigid CAT(0) group and 
non equivariant rigid.

Then the following conjecture arises.

\begin{Conjecture}
The group $G=(F_2\times \Z)*\Z_2$ 
will be a non-rigid CAT(0) group with uncountably many boundaries.
\end{Conjecture}

This conjecture comes from the following idea.

For $p \ge q \ge 1$, 
let $T_{p,q}$ be the Cayley graph of the free group $F_2$ with the generating set $\{a,b\}$ such that
\begin{enumerate}
\item[$\bullet$] 
the length of $[g,ga]$ is $p$ and the length of $[g,gb]$ is $q$ for any $g\in F$.
\end{enumerate}
Then $F_2\times \Z$ acts naturally on $T_{p,q}\times \R$.
By similar arguments to Examples~5.3--5.7,
we can construct a {\it cuboidal} cell complex $\Sigma_{p,q}$ on which 
$G=(F_2\times \Z)*\Z_2$ acts geometrically, 
where the 1-skeleton of $\Sigma_{p,q}$ is the Cayley graph of $G$ 
and $T_{p,q} \subset \Sigma_{p,q}^{(1)}$.

Then from the argument of Example~\ref{example1}, 
the author expects that 
if $\frac{p}{q} \neq \frac{p'}{q'}$ then 
the boundaries $\partial \Sigma_{p,q}$ and $\partial \Sigma_{p',q'}$ 
will be not homeomorphic.

\vspace*{4mm}

If this conjecture and this idea will be the case, 
then 
the right-angled Artin group 
$$(F_2 \times \Z)*\Z,$$ 
the right-angled Coxeter group 
$$(W_A \times W_B)*W_C$$ 
where $W_A=\Z_2*\Z_2*\Z_2$, $W_B=\Z_2*\Z_2$ and $W_C=\Z_2$, etc, 
will be also non-rigid CAT(0) groups with uncountably many boundaries.

\section{On rigidity}

Let $G$ and $H$ be groups acting 
geometrically (i.e.\ properly and cocompactly by isometries) 
on metric spaces $(X,d_X)$ and $(Y,d_Y)$ respectively.
We consider orbits $G x_0 \subset X$ and $Hy_0 \subset Y$ 
where $x_0\in X$ and $y_0\in Y$.

Let $\phi:G\to H$ be a map and 
let $\phi':Gx_0\to Hy_0$ $(gx_0\mapsto \phi(g)y_0)$.

Here if 
$X$ and $Y$ are Gromov hyperbolic spaces, CAT(0) spaces or Busemann spaces, 
then we can define the boundaries $\partial X$ and $\partial Y$.

Then it is well-known that 
if $\phi:G\to H$ is an isomorphism then 
$\phi':Gx_0\to Hy_0$ is a quasi-isometry  
and moreover if $G$ is Gromov hyperbolic then 
$\phi'$ induces an equivariant homeomorphism 
$\bar{\phi}: \partial X \to \partial Y$.

Theorem~\ref{Thm2} implies that 
if $\phi:G\to H$ is an isomorphism and 
the map $\phi':Gx_0\to Hy_0$ satisfies the condition~$(**)$
then $\phi'$ induces an equivariant homeomorphism 
$\bar{\phi}: \partial X \to \partial Y$.

$$
\begin{array}{ccccccccc}
G & \stackrel{\cdot}{\curvearrowright} & X & \supset &  Gx_0 & \longleftrightarrow & \partial X \\[1mm]
\ \ \ \downarrow{\phi}   & & & & \ \ \ \downarrow{\phi'}  & & \ \ \ \downarrow{\bar{\phi}}  \\[1mm]
H & \stackrel{\cdot}{\curvearrowright} & Y & \supset & Hy_0 & \longleftrightarrow & \partial Y 
\end{array}
$$

\vspace{4mm}

Then there are problems of rigidity.
\begin{enumerate}
\item[(I)] 
If $\phi:G\to H$ is an isomorphism then 
when does there exist an homeomorphism $\bar{\phi}: \partial X \to \partial Y$?
\item[(II)] 
If $\phi:G\to H$ is an isomorphism then 
when does $\phi'$ induce an equivariant homeomorphism 
$\bar{\phi}: \partial X \to \partial Y$?
\item[(III)] 
If $X=Y$ and $Gx_0=Hx_0$ then 
when are groups $G$ and $H$ virtually isomorphic 
(i.e.\ there exist finite-index subgroups $G'$ and $H'$ of $G$ and $H$ respectively 
such that $G'$ and $H'$ are isomorphic)?
\item[(IV)] 
If $X=Y$ and $Gx_0=Hx_0$ then 
when do there exist finite-index subgroups $G'$ and $H'$ of $G$ and $H$ respectively 
such that $G'$ and $H'$ are conjugate in the isometry group $\Isom(X)$ of $X$?
\item[(V)] 
If there is an isomorphism $\phi:G\to H$ then 
when does there exist a homeomorphism (or homotopy equivalence 
or strongly deformation retract) 
$\psi: X/G \to Y/H$?
\end{enumerate}

Here it seems that (III)--(V) are relate 
to \cite{BL}, \cite{Fu}, \cite{Fu2}, \cite{Gr-P}, 
\cite{K}, \cite{Mon}, \cite{M-S}, \cite{Mos} and \cite{Mos2}.

%

%
\end{document}